\def\titlerunning#1{\gdef\titrun{#1}}
\def\author#1{\gdef\autrun{\def\and{\unskip, }#1}\gdef\@author{#1}}
\def\address#1{{\def\and{\\\hspace*{18pt}}\renewcommand{\thefootnote}{}%
\footnote {#1}}%
\markboth{\autrun}{\titrun}}
\def\email#1{e-mail: #1}
\def\subjclass#1{{\renewcommand{\thefootnote}{}%
\footnote{\emph{Mathematics Subject Classification (2020):} #1}}}
\def\keywords#1{\par\medskip
\noindent\textbf{Keywords.} #1}
\newtheorem{theorem}{Theorem}[section]
\newtheorem{corollary}[theorem]{Corollary}
\newtheorem{lemma}[theorem]{Lemma}
\newtheorem{proposition}[theorem]{Proposition}
\theoremstyle{definition}
\newtheorem{definition}[theorem]{Definition}
\newtheorem{remark}[theorem]{Remark}
\numberwithin{equation}{section}
\def \a {\alpha }
\def \b {\beta}
\def \La {\Lambda}
\def\Om{\Omega}
\def\Ga{\Gamma}
\begin{document}
\baselineskip=17pt

\titlerunning{Gårding cones and positivity of curvature operators}
\title{Gårding cones and positivity of curvature operators}

\author{Teng Huang and Jiaogen Zhang }

\date{}

\maketitle

\address{T. Huang: School of Mathematical Sciences, University of Science and Technology of China; CAS Key Laboratory of Wu Wen-Tsun Mathematics, University of Science and Technology of China, Hefei, Anhui, 230026, P. R. China; \email{htmath@ustc.edu.cn;htustc@gmail.com}}
\address{J. Zhang: School of Mathematics, Hefei University of Technology,
   Hefei, Anhui, 230009, P. R. China;
\email{zjgmath@ustc.edu.cn}}
\subjclass{53C20; 53C21; 58A10; 58A14}

\begin{abstract}
This article explores the relationship between Gårding cones $\Gamma^{+}_{k}$ and $k$-positive cone $\mathcal{P}_{k}$, demonstrating that under particular choices of $m$ and $\alpha$, the shift cone $\overline{\Ga}^{+}_{2}(\alpha)$ is contained in $\overline{\mathcal{P}}_{m}$. By combining these results with the study of positivity properties of curvature operators, we establish several new connections between algebraic positivity conditions and the geometry of underlying Riemannian manifolds. Our main theorems reveal how shifted cone conditions on curvature operators—both standard and of the second kind—constrain topology, including vanishing theorems for Betti numbers and characterizations of spherical space forms. 
\end{abstract}
\keywords{Gårding cones,  curvature operators,  Betti numbers, spherical space forms}

\section{Introduction}

A central topic in Riemannian geometry is understanding how geometric assumptions constrain the topology of underlying manifolds, it is a  fundamental problem to give reasonable curvature conditions of positivity to ensure a topological classification of such manifolds. A seminal result in this direction was established by Meyer \cite{Mey71}, who proved that compact manifolds admitting positive curvature operators must be rational homology spheres. This work was later complemented by Gallot and Meyer \cite{GM75}, who demonstrated the  corresponding rigidity theorem. Namely, they proved that every compact manifold  with nonnegative curvature operator is either reducible, locally symmetric, or its universal cover has the cohomology of a sphere or a complex projective space. 
 
In a parallel development, Ogiue and Tachibana \cite{OT79} established an analogous result to Meyer's theorem, proving that compact manifolds with positive curvature operators of the \textit{second kind} are exactly rational homology spheres. This led Nishikawa \cite{Nis86} to further conjecture that such manifolds are diffeomorphic to spherical space forms. A major breakthrough was achieved by Cao, Gursky, and Tran \cite{CGT23} recently, who confirmed Nishikawa's conjecture for compact manifolds with merely $2$-positive curvature operators of the second kind. Subsequently, Li \cite{Li24} generalized their result by relaxing the assumption to $3$-positive on the curvature operators of the second kind.

Over the past four decades, Ricci flow has become a cornerstone technique for classifying compact manifolds with positive curvature operators, leading to significant results such as the well-known Bochner vanishing-type theorems \cite{Boc46}. For instance, a seminal theorem by Hamilton \cite{Ham82,Ham86} established that compact manifolds endowed with  positive curvature operators must be diffeomorphic to space forms. This result was later  generalized by Chen \cite{Che91} and B\"{o}hmann and Wilking \cite{BW08} to manifolds with only $2$-positive curvature operators. The corresponding rigidity theorem was subsequently resolved by Ni and Wu \cite{NW07}.  Brendle \cite{Bre08} and Brendle and Schoen \cite{BS08,BS09} established that $M\times \mathbb{R}^{i}$ $(i=1,2)$ have positive isotropic curvature, leading to the proof of the differentiable sphere theorem. Namely,  any manifold with $1/4$-pinched  sectional curvature admits a metric of constant scalar curvature and is therefore diffeomorphic to a spherical space form. For further classifications on compact manifolds with positive isotropic curvature, we refer to \cite{Bre19,CTZ12,CZ06,Ham97,Huang23,MM88} and the reference therein.

Betti numbers are fundamental topological invariants that distinguish between spaces that are not homotopy equivalent. Let $(X,g)$ be a compact $n$-dimensional Riemannian manifold without boundary.  It is an interesting problem to investigate under what geometric conditions the $p$-th Betti number $b_{p}(X)$ $(1\leq p\leq n-1)$ vanishes. According to  Meyer \cite{Mey71} and Gallot and Meyer \cite{GM75}, it was established that the $b_{p}(X)=0$  for all $p$ under the condition that the curvature operator is nonnegative and positive at least at one point.  By assuming the curvature operator is merely $(n-p)$-positive, Petersen and Wink \cite{PW21a} established that $b_{q}(X)=b_{n-q}(X)=0$ for all $q\in \{1,\cdots,p\}$ when $1\leq p\leq \lfloor \frac{n}{2}\rfloor$\footnote{In this article, $\lfloor c\rfloor$ denotes the largest integer not exceeding $c$, and $\lceil c\rceil$ denotes the smallest integer not less than $c$.}. In particular, when the curvature operator is $\lceil \frac{n}{2} \rceil$-positive, we have $b_{p}(X)=0$ for all $p$. Subsequently, in a collaboration with Nienhaus in \cite{NPW23}, they further demonstrated that  for all integers $1\leq p\leq \frac{n}{2}$, the condition $b_{p}(X)=0$ holds when the curvature operator of the second kind is $C_{p}$-positive.  Recently, Yang and Zhang \cite{YZ25} introduced a new positivity concept for curvature operators, which implies the vanishing properties of Betti numbers. 

More specifically, let $\Lambda_{1}=(\lambda_{1},\cdots,\lambda_{N})$ with $N=\frac{n(n-1)}{2}$ be the eigenvalues of curvature operator $\mathfrak{R}$ on $\Lambda^2TX$. Define $\alpha_{k}=\frac{1}{N}\big(1-\sqrt{\frac{k}{(N-1)(N-k)}}\big)$ for every $k=1,\cdots,N-1$. The condition $\Lambda_1\in \Gamma_{2}^{+}(\alpha_{k})$ (see Definition \ref{Gamma k alpha} below) ensures that $b_{p}(X)=0$ for all $p$ when $1\leq k\leq \lfloor \frac{n}{2}\rfloor$, and  $b_{p}(X)=b_{n-p}(X)=0$ for all $p\in \{1,\cdots,n-k\}$ when  $\lfloor \frac{n}{2}\rfloor+1 \leq k\leq n-1$. We remark that the main result in \cite{YZ25} relies critically on the cone inclusion relation $\Gamma_{2}^{+}(\alpha_{k}) \subset \mathcal{P}_{k}$, where  $\mathcal{P}_{k}$ is defined as in \eqref{Pk} below.  In this work, we  establish a more general cone inclusion relation between $\Gamma_{2}^{+}(\alpha)$ and $\mathcal{P}_{m}$ (see Definition \ref{Pm} below) for arbitrary $0<\alpha<\frac{1}{N}$, broadening the applicability of these results in differential geometry. 

Our first main result is the following
\begin{theorem}\label{T1}
For every $0<\epsilon<1$, let
$$\a_{\epsilon}=\frac{1}{N}(1-\epsilon)\quad and\quad m_{\epsilon} =\frac{N(N-1)\epsilon^2}{1+(N-1)\epsilon^2}.$$
We have the following cone inclusion relation:
\begin{equation*}
\overline{\Gamma}^{+}_{2}(\alpha_{\epsilon})\subset \overline{\mathcal{P}}_{m_{\epsilon}}\,.
\end{equation*}
Moreover, for every vector $V=(\mu_{1},\cdots,\mu_{N})\in \overline{\Gamma}^{+}_{2}(\alpha_{\epsilon})$ with $\mu_{1}\leq\cdots\leq\mu_{N}$ and $\bar{\mu}:=\sum_{i=1}^{n}\mu_{i}>0$, then one of the following holds:
\begin{itemize}
\item[(1)] We have $\mu_{1}+\cdots+\mu_{\lfloor  m_{\epsilon}\rfloor}+\big(m_{\epsilon}-\lfloor m_{\epsilon}\rfloor\big)\mu_{\lfloor m_{\epsilon}\rfloor+1}> 0$.
\item[(2)] We have $m_{\epsilon}\in\mathbb{N}^{+}$, $\mu_{1}=\mu_{2}=\cdots=\mu_{ m_{\epsilon} }=0$, and $\mu_{ m_{\epsilon} +1}=\mu_{  m_{\epsilon} +2}=\cdots=\mu_{N}>0$.
\end{itemize}
\end{theorem}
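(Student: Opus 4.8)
First I would render both cones completely explicit. Unwinding Definition \ref{Gamma k alpha} (closure commutes with the invertible linear shift $\Lambda\mapsto\Lambda-\alpha_\epsilon\sigma_1(\Lambda)\mathbf 1$, valid since $\alpha_\epsilon\ne\tfrac1N$), a vector $\Lambda$ lies in $\overline{\Gamma}^+_2(\alpha_\epsilon)$ exactly when $W:=\Lambda-\alpha_\epsilon\,\sigma_1(\Lambda)\,\mathbf 1$ lies in $\overline{\Gamma}^+_2=\{W:\sigma_1(W)\ge0,\ \sigma_1(W)^2\ge|W|^2\}$, where $\sigma_1(\Lambda)=\sum_i\lambda_i$ and $\mathbf 1=(1,\dots,1)$. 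Writing $s:=\sigma_1(\Lambda)$ and using $\alpha_\epsilon=\tfrac{1-\epsilon}{N}$, one computes $\sigma_1(W)=\epsilon s$ and $|W|^2=|\Lambda|^2-\tfrac{1-\epsilon^2}{N}s^2$; since a one-line computation gives $\epsilon^2+\tfrac{1-\epsilon^2}{N}=\tfrac{1+(N-1)\epsilon^2}{N}=\tfrac1{N-m_\epsilon}$ (and $0<m_\epsilon<N-1$), this yields the clean description
\begin{equation*}
\overline{\Gamma}^+_2(\alpha_\epsilon)=\Big\{\Lambda:\sigma_1(\Lambda)\ge0,\ \ |\Lambda|^2\le\tfrac{\sigma_1(\Lambda)^2}{N-m_\epsilon}\Big\}.
\end{equation*}
On the other side, with $\mu_1\le\dots\le\mu_N$ the reordered entries of $\Lambda$, the functional defining $\overline{\mathcal P}_m$ in Definition \ref{Pm} satisfies $\mu_1+\dots+\mu_{\lfloor m\rfloor}+(m-\lfloor m\rfloor)\mu_{\lfloor m\rfloor+1}=\min\{\langle t,\Lambda\rangle:t\in[0,1]^N,\ \sum_it_i=m\}$, the minimum being attained by loading all admissible weight onto the smallest coordinates. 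Hence the inclusion reduces to showing $\langle t,\Lambda\rangle\ge0$ for every such weight vector $t$ and every $\Lambda\in\overline{\Gamma}^+_2(\alpha_\epsilon)$.

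For the core estimate, put $m:=m_\epsilon$ and split off means: $\Lambda=\tfrac sN\mathbf 1+\Lambda_0$ with $\Lambda_0\perp\mathbf 1$, and $t=\tfrac mN\mathbf 1+t_0$ with $t_0\perp\mathbf 1$. Then $\langle t,\Lambda\rangle=\tfrac mN s+\langle t_0,\Lambda_0\rangle\ge\tfrac mN s-|t_0|\,|\Lambda_0|$. The cone condition gives $|\Lambda_0|^2=|\Lambda|^2-\tfrac{s^2}N\le\tfrac{m\,s^2}{N(N-m)}$, while $t\in[0,1]^N$ gives $\sum_it_i^2\le\sum_it_i=m$, hence $|t_0|^2=\sum_it_i^2-\tfrac{m^2}N\le\tfrac{m(N-m)}N$. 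Multiplying the two bounds yields precisely $|t_0|\,|\Lambda_0|\le\tfrac mN s$, so $\langle t,\Lambda\rangle\ge0$, which proves $\overline{\Gamma}^+_2(\alpha_\epsilon)\subset\overline{\mathcal P}_{m_\epsilon}$.

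For the dichotomy, let $V=(\mu_1\le\dots\le\mu_N)\in\overline{\Gamma}^+_2(\alpha_\epsilon)$ with $\bar\mu:=\sigma_1(V)>0$, and suppose alternative (1) fails, i.e. $f(V):=\mu_1+\dots+\mu_{\lfloor m\rfloor}+(m-\lfloor m\rfloor)\mu_{\lfloor m\rfloor+1}=0$. Apply the estimate above to $t^\ast:=(1,\dots,1,\,m-\lfloor m\rfloor,\,0,\dots,0)$ ($\lfloor m\rfloor$ ones), for which $\langle t^\ast,V\rangle=f(V)=0$; then every inequality in the estimate is an equality. In particular $|t_0^\ast|\,|\Lambda_0|=\tfrac mN\bar\mu$ combined with the two sharp bounds forces $\sum_i(t_i^\ast)^2=m=\sum_it_i^\ast$, i.e. $\sum_it_i^\ast(1-t_i^\ast)=0$, so every $t_i^\ast\in\{0,1\}$; this forces $m=m_\epsilon\in\mathbb N^+$ and $t^\ast$ to be the indicator of the first $m$ coordinates. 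Equality in Cauchy--Schwarz together with the sharp norm bound then forces $\Lambda_0=-\gamma\,t_0^\ast$ for some $\gamma>0$; solving $\langle t^\ast,V\rangle=0$ gives $\gamma=\tfrac{\bar\mu}{N-m}$, and substituting into $V=\tfrac{\bar\mu}N\mathbf 1+\Lambda_0$ yields $\mu_1=\dots=\mu_m=0$ and $\mu_{m+1}=\dots=\mu_N=\tfrac{\bar\mu}{N-m}>0$, which is alternative (2).

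I expect the one genuinely delicate point to be the equality analysis of the last paragraph: one must verify that equality propagates simultaneously through all three inequalities, that $\sum_i(t_i^\ast)^2=m$ can hold only when $m$ is an integer (since $(m-\lfloor m\rfloor)^2<m-\lfloor m\rfloor$ otherwise), and finally that the extremal $V$ so produced is compatible with the increasing ordering and indeed lies on $\partial\overline{\Gamma}^+_2(\alpha_\epsilon)$. Everything else is routine once the identity $\epsilon^2+\tfrac{1-\epsilon^2}{N}=\tfrac1{N-m_\epsilon}$ and the variational formula for the $\overline{\mathcal P}_m$ functional are recorded.
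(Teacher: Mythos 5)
Your proof is correct, and it follows a genuinely different route from the paper's. The paper works directly with the sorted functional $c_{0}=\sum_{i=1}^{\lfloor m\rfloor}\mu_{i}+(m-\lfloor m\rfloor)\mu_{\lfloor m\rfloor+1}$: it splits $\sum_i\mu_i^2$ into a head $T_1$ and a tail $T_2$, applies Cauchy--Schwarz to the tail against an almost-constant vector to get $T_2\geq\frac{(\bar\mu-c_0)^2}{N-m_\epsilon}$, and feeds this into the inequality $\sum_i\mu_i^2\leq\frac{\bar\mu^2}{N-m_\epsilon}$ (the same inequality as your explicit description of $\overline{\Gamma}^+_2(\alpha_\epsilon)$, obtained from the same computation of $\sigma_2(V_{\alpha_\epsilon})$) to conclude $\frac{2\bar\mu c_0}{N-m_\epsilon}\geq T_1+\frac{c_0^2}{N-m_\epsilon}\geq0$. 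You instead dualize: you characterize the $\overline{\mathcal P}_m$ functional as $\min\{\langle t,V\rangle: t\in[0,1]^N,\ \sum_i t_i=m\}$, decompose both $t$ and $V$ orthogonally against $\mathbf 1$, and multiply the two norm bounds $|t_0|^2\leq\frac{m(N-m)}{N}$ and $|\Lambda_0|^2\leq\frac{m\bar\mu^2}{N(N-m)}$. Your version proves the slightly stronger statement that $\langle t,V\rangle\geq0$ for every fractional weight in the polytope (hence all index subsets at once, not just the sorted minimum), and it makes the equality analysis cleaner: equality forces $t^\ast\in\{0,1\}^N$, which immediately yields $m_\epsilon\in\mathbb N$, and equality in Cauchy--Schwarz pins $V$ down to the rank-one profile of alternative (2); the paper instead has to argue separately that $m\notin\mathbb N$ leads to $\bar\mu=0$ via a second Cauchy--Schwarz on the tail. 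The one computation both proofs share is the identity $\frac{1+(N-1)\epsilon^2}{N}=\frac{1}{N-m_\epsilon}$, which is where the specific formula for $m_\epsilon$ enters. All the delicate points you flag (simultaneous propagation of equality, $(m-\lfloor m\rfloor)^2=m-\lfloor m\rfloor$ forcing integrality, compatibility of the extremal $V$ with the ordering) are in fact handled by what you wrote, so no gap remains.
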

\begin{corollary}\label{C2}
For every $0<\epsilon<1$, we have
\begin{equation*}
\Gamma^{+}_{2}(\alpha_{\epsilon})\subset {\mathcal{P}_{m_{\epsilon}}}\,.
\end{equation*} 
\end{corollary}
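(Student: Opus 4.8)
\medskip
\noindent\textbf{Proof proposal for Corollary \ref{C2}.}
The plan is to read this off from Theorem \ref{T1}: the ``moreover'' clause of that theorem is precisely the extra input needed to pass from the closed inclusion to an inclusion of open cones. Fix $0<\epsilon<1$ and let $V=(\mu_{1},\dots,\mu_{N})\in\Gamma^{+}_{2}(\alpha_{\epsilon})$; since every cone in sight is invariant under permutations of the coordinates we may assume $\mu_{1}\le\cdots\le\mu_{N}$. By Definition \ref{Pm}, the strict inequality in alternative~(1) of Theorem \ref{T1} is exactly the statement $V\in\mathcal{P}_{m_{\epsilon}}$, so the entire task is to show that $V$ falls under alternative~(1). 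This reduces to two points: (i) that Theorem \ref{T1} is applicable to $V$, and (ii) that alternative~(2) cannot occur for $V$.

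For (i): I would first check $\bar{\mu}:=\sum_{i=1}^{N}\mu_{i}>0$. Writing $W$ for the shifted vector of Definition \ref{Gamma k alpha} --- obtained from $V$ by subtracting $\alpha_{\epsilon}\bar{\mu}$ from each coordinate, so that $\sigma_{1}(W)=(1-N\alpha_{\epsilon})\bar{\mu}=\epsilon\,\bar{\mu}$ --- the hypothesis $V\in\Gamma^{+}_{2}(\alpha_{\epsilon})$ means that $W$ lies in the open G\aa rding cone $\Gamma^{+}_{2}=\{\sigma_{1}>0,\ \sigma_{2}>0\}$, so $\sigma_{1}(W)>0$, i.e. $\epsilon\,\bar{\mu}>0$ and hence $\bar{\mu}>0$. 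Thus Theorem \ref{T1} applies to $V$ (which lies in $\overline{\Gamma}^{+}_{2}(\alpha_{\epsilon})$), and $V$ is in case~(1) or in case~(2).

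For (ii): suppose case~(2) holds. Then $m_{\epsilon}\in\mathbb{N}^{+}$ and $V$ is the vector $V^{\ast}=(0,\dots,0,c,\dots,c)$ with $m_{\epsilon}$ zeros followed by $p:=N-m_{\epsilon}$ entries equal to $c:=\mu_{N}>0$. I would show that such a $V^{\ast}$ lies on the boundary, not in the interior, of $\Gamma^{+}_{2}(\alpha_{\epsilon})$, which contradicts $V=V^{\ast}\in\Gamma^{+}_{2}(\alpha_{\epsilon})$. Concretely: the shifted vector $W$ of $V^{\ast}$ has its first $m_{\epsilon}$ coordinates equal to $-a$ with $a=\alpha_{\epsilon}\,p\,c$ and its last $p$ coordinates equal to $b=c-a$, so $\sigma_{1}(W)=\epsilon\,p\,c>0$ and $|W|^{2}=m_{\epsilon}\,a^{2}+p\,b^{2}$. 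Plugging these into the identity $\sigma_{2}(W)=\tfrac{1}{2}\bigl(\sigma_{1}(W)^{2}-|W|^{2}\bigr)$ and simplifying, one finds that $\sigma_{2}(W)=0$ is equivalent to $m_{\epsilon}=(N-1)(N-m_{\epsilon})\epsilon^{2}$, which is just the rearrangement of the defining relation $m_{\epsilon}=\tfrac{N(N-1)\epsilon^{2}}{1+(N-1)\epsilon^{2}}$. Hence $\sigma_{2}(W)=0$ while $\sigma_{1}(W)>0$, so $W\in\partial\Gamma^{+}_{2}$, whence $V^{\ast}\in\partial\Gamma^{+}_{2}(\alpha_{\epsilon})$ and $V^{\ast}\notin\Gamma^{+}_{2}(\alpha_{\epsilon})$ --- the desired contradiction. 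Therefore case~(2) is impossible, case~(1) holds, and $V\in\mathcal{P}_{m_{\epsilon}}$, proving $\Gamma^{+}_{2}(\alpha_{\epsilon})\subset\mathcal{P}_{m_{\epsilon}}$.

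I expect the only point requiring care to be the computation in~(ii): the shift and the elementary symmetric functions must be tracked precisely so that the equation $\sigma_{2}(W)=0$ collapses exactly onto the defining relation for $m_{\epsilon}$. This is no accident --- it is exactly the sharpness of the constant $m_{\epsilon}$ (equivalently of $\alpha_{\epsilon}$) in Theorem \ref{T1} --- but it is elementary algebra and poses no real obstacle. As a cross-check, a computation-free route is also available: since $\alpha_{\epsilon}=\tfrac{1-\epsilon}{N}\neq\tfrac{1}{N}$, the shift of Definition \ref{Gamma k alpha} is an invertible linear map, and $\Gamma^{+}_{2}$ and $\mathcal{P}_{m_{\epsilon}}$ are open convex cones; hence $\Gamma^{+}_{2}(\alpha_{\epsilon})=\operatorname{int}\overline{\Gamma}^{+}_{2}(\alpha_{\epsilon})$ and $\mathcal{P}_{m_{\epsilon}}=\operatorname{int}\overline{\mathcal{P}}_{m_{\epsilon}}$, and applying the monotonicity of the interior operation to the inclusion $\overline{\Gamma}^{+}_{2}(\alpha_{\epsilon})\subset\overline{\mathcal{P}}_{m_{\epsilon}}$ of Theorem \ref{T1} yields $\Gamma^{+}_{2}(\alpha_{\epsilon})\subset\mathcal{P}_{m_{\epsilon}}$ immediately.
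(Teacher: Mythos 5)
Your proposal is correct and follows essentially the same route as the paper: apply Theorem \ref{T1} after noting $\sigma_{1}(V_{\alpha_{\epsilon}})=\epsilon\,\bar{\mu}>0$, then rule out the degenerate alternative~(2) because it forces $\sigma_{2}(V_{\alpha_{\epsilon}})=0$, contradicting membership in the open cone $\Gamma^{+}_{2}(\alpha_{\epsilon})$. You merely make explicit the computation the paper only asserts (that case~(2) yields $\sigma_{2}=0$ precisely by the defining relation for $m_{\epsilon}$), and your computation checks out.
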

\begin{remark}
Theorem \ref{T1} extends the result of Yang and Zhang \cite[Lemma 2.4]{YZ25}, showing that when $\epsilon = \sqrt{\frac{k}{(N-1)(N-k)}}$ (i.e. $m_{\epsilon} = k$ ) for some integer $1\leq k\leq N-1$. Compared to theirs, our proof is not only simpler but also applies to arbitrary $\epsilon \in (0,1)$.
\end{remark}

\begin{definition}\label{shifted cone}(Shifted vectors) 
Denote
\begin{equation}\label{N12}
N_{1}=\frac{n(n-1)}{2} \quad \textrm{and} \quad N_{2}=\frac{(n-1)(n+2)}{2}\,.
\end{equation} 
Consider vectors $\Lambda_{1} = (\lambda_{1}, \dots, \lambda_{N_{1}})\in \mathbb{R}^{N_1}$ and   $\Lambda_{2} = (\nu_{1}, \dots, \nu_{N_{2}})\in \mathbb{R}^{N_2}$. We say $\Lambda_1$ is \textit{ordered} if $\lambda_1   \leq \cdots \leq \lambda_{N_1}$, and $\Lambda_2$ is \textit{ordered} if $\nu_1 \leq \cdots \leq \nu_{N_2}$. For fixed nonnegative constants $\alpha\in (0,\frac{1}{N_{1}})$ and $\beta\in (0,\frac{1}{N_{2}})$, define the shifted vectors
\begin{equation}
\Lambda_{\alpha} := \Lambda_{1} - \alpha \big(\sum_{i=1}^{N_{1}}\lambda_{i}, \dots, \sum_{i=1}^{N_{1}}\lambda_{i}\big), \qquad \Lambda_{\beta} := \Lambda_{2} - \beta \big(\sum_{i=1}^{N_{2}}\nu_{i}, \dots, \sum_{i=1}^{N_{2}}\nu_{i}\big).
\end{equation}
\end{definition}

In the second part of this article, we apply Theorem \ref{T1} to two classes of curvature operators in Riemannian geometry: the curvature operator and the curvature operator of the second kind.

First, the curvature operator defined as
$$\mathfrak{R}:\Lambda^{2}TX\rightarrow\Lambda^{2}TX,\quad (\mathfrak{R}(\omega))_{ij}=\sum_{k,l=1}^{n}R_{ijkl}\omega_{kl}.$$
Suppose that $\Lambda_{1}=(\lambda_{1},\cdots,\lambda_{N_1})\in \mathbb{R}^{N_1}$ are the  ordered  eigenvalues of the curvature operator $\mathfrak{R}$ on $\Lambda^{2}TX$. The scalar curvature $R$ of the Riemannian manifold $(X,g)$ can be expressed in terms of these eigenvalues as
\begin{equation}\label{R1}
R=2\sum_{i=1}^{N_1}\lambda_{i}.
\end{equation}

Beyond the standard curvature operator, the Riemann curvature tensor naturally induces a self-adjoint operator on the space of symmetric $(0,2)$-tensors:
$$\overline{R}:S^{2}(TX)\rightarrow S^{2}(TX), \qquad (\overline{R}(h))_{ij}=\sum_{k,l}R_{iklj}h_{kl}.$$
The curvature operator of the second kind is the induced map on the space of trace-free symmetric $(0,2)$-tensors:
$$\mathcal{R}:S_{0}^{2}(TX)\rightarrow S^{2}_{0}(TX), \qquad \mathcal{R}={\rm{pr}}_{S^{2}_{0}(TX)}\circ\overline{R}|_{S^{2}_{0}(TX)}.$$
This operator was previously studied by Bourguignon and Karcher \cite{BK78}. In contrast to $\overline{R}$, the curvature operator of the second kind $\mathcal{R}$ satisfies the natural geometric property that $\mathcal{R}\geq \kappa$ implies all sectional curvatures are bounded below by $\kappa$.
Suppose that $\Lambda_{2}=(\nu_{1},\cdots,\nu_{N_2})\in \mathbb{R}^{N_2}$ are the ordered eigenvalues of $\mathcal{R}$ on $S^{2}_{0}(TX)$. In this case, the scalar curvature $R$ of the Riemannian manifold $(X,g)$ can be expressed in terms of these eigenvalues as
\begin{equation}\label{R2}
R=\frac{2n}{n+2}\sum_{i=1}^{N_2}\nu_{i}.
\end{equation}
 
To study the prescribed $\sigma_{k}$-scalar curvature problem, Viaclovsky \cite{Via00} introduced the $k$-th elementary symmetric function of the Schouten tensor's eigenvalues. Subsequently, many significant contributions have been made to this area; see e.g. \cite{GLW04,GLW05,GVW03,GW04} and references therein. Inspired by this framework, Yang and Zhang \cite{YZ25} introduced a new positivity concept for the curvature operator $\mathfrak{R}$. In this paper, we also extend this positivity concept to the curvature operator of the second kind $\mathcal{R}$.
\begin{definition}
For a fixed index $j\in\{1,2,\cdots,N_{1} \}$, we say that the metric $g\in \Gamma^{+}_{j}(\alpha)$ (resp. $g\in\overline{\Ga}^{+}_{j}(\alpha)$) if the shifted vector $\Lambda_{\alpha}\in\Gamma^{+}_{j}$ (resp. $\Lambda_{\alpha}\in\overline{\Ga}^{+}_{j}$) holds at every point $x\in X$.

Analogously, for a fixed index $j\in\{1,2,\cdots,N_{2} \}$, we say that the metric $g\in \Gamma^{+}_{j}(\beta)$ (resp. $g\in\overline{\Ga}^{+}_{j}(\beta)$) if the shifted vector $\Lambda_{\beta}\in\Gamma^{+}_{j}$ (resp. $\Lambda_{\beta}\in\overline{\Ga}^{+}_{j}$) holds at every point $x\in X$.
\end{definition}

Building upon their works \cite{NPW23, NPWW23, PW21a, PW21b}, the authors systematically developed the theory of curvature operators, introducing both: the nonnegativity of the curvature operator, and its counterpart for curvature operators of the second kind.
\begin{definition}
Let $(X,g)$ be an $n$-dimensional Riemannian  manifold. Suppose that the curvature operator $\mathfrak{R}$ having ordered eigenvalues $\Lambda_{1}=(\lambda_{1},\cdots,\lambda_{N_1})$, and the curvature operator of the second kind $\mathcal{R}$ having ordered eigenvalues $\Lambda_{2}=(\nu_{1},\cdots,\nu_{N_2}) $. 
We say that 
\begin{itemize}
\item[(i)] $\mathfrak{R}$ is $m$-positive (resp. $m$-nonnegative) with $1\leq m\leq N_{1}-1$ if
$$\lambda_{1}+\lambda_{2}+\cdots\lambda_{\lfloor m\rfloor}+(m-\lfloor m\rfloor)\lambda_{\lfloor m\rfloor+1}>0\quad (\textrm{resp.} \geq 0) \,.$$
\item[(ii)] $\mathcal{R}$ is $m$-positive (resp. $m$-nonnegative) with $1\leq m\leq N_{2}-1$ if
$$\nu_{1}+\nu_{2}+\cdots\nu_{\lfloor m\rfloor}+(m-\lfloor m\rfloor)\nu_{\lfloor m\rfloor+1}>0\quad (\textrm{resp.} \geq 0)\,.$$
\end{itemize}
\end{definition}
\begin{remark}
We observe that for a Riemannian manifold, the following conditions are equivalent:
\begin{itemize}
\item[(i)] The curvature operator $\mathfrak{R}$ (resp. the curvature operator of the second kind $\mathcal{R}$) is $m$-positive.
\item[(ii)] The corresponding eigenvalue vector $\Lambda_1=(\lambda_{1},\cdots,\lambda_{N_{1}})\in\mathcal{P}_{m}$ 
 (resp. $\Lambda_2=(\nu_{1},\cdots,\nu_{N_{2}})\in\mathcal{P}_{m}$).
\end{itemize}
\end{remark}

For any $\epsilon\in(0,1)$, define 
\begin{equation}
\alpha_\epsilon=\beta_\epsilon=\frac{1}{N}(1-\epsilon) \quad and\quad
m_{\epsilon}= \frac{N(N-1)\epsilon^2}{1+(N-1)\epsilon^2},
\end{equation}
where $N=N_{1}$ for the curvature operator $\mathfrak{R}$ and $N=N_{2}$ for the curvature operator of the second kind $\mathcal{R}$. As a first application of Theorem \ref{T1}, we establish the following vanishing result for Betti numbers under a certain curvature constraint  on the operator  $\mathfrak{R}$ or $\mathcal{R}$.
\begin{theorem}(Theorem \ref{T2}+Theorem \ref{T4})
Let $(X,g)$ be a compact Riemannian manifold of dimension $n\geq 3$. If $g\in\Gamma^{+}_{2}(\a_{\epsilon})$ (resp. $g\in\Gamma^{+}_{2}(\beta_{\epsilon})$) for some $\epsilon\in(0,1)$, then its curvature operator $\mathfrak{R}$ (resp. the curvature of the second kind $\mathcal{R}$) is $m_{\epsilon}$-positive. Furthermore, 
\begin{itemize}
\item[(1)] For the curvature operator $\mathfrak{R}$,
\begin{itemize}
\item If $k-1< m_{\epsilon}\leq k$ for an integer $1\leq k\leq \lceil\frac{n}{2}\rceil$, then $b_{p}(X)=0$ for all $1\leq p\leq n-1$.
\item If  $k-1<m_{\epsilon}\leq k$ for an integer $ \lceil\frac{n}{2}\rceil+1\leq k\leq n-1$, then 
$$ b_{1}(X)=\cdots=b_{n-k}(X)=0\quad and\quad b_{k}(X)=\cdots=b_{n-1}(X)=0.$$
\end{itemize}
\item[(2)] For the curvature operator of second kind $\mathcal{R}$,
\begin{itemize}
\item If  $m_{\epsilon}\leq\lfloor\frac{3n}{4}\rfloor$, then $b_{p}(X)=0$ for all $1\leq p\leq n-1$.
\item If $m_{\epsilon}\leq C_{k}(n):=\frac{3}{2}\frac{n(n+2)k(n-k)}{n^{2}k-nk^{2}-2nk+2n^{2}+2n-4k}$ and $1\leq k\leq\lfloor\frac{n}{2}\rfloor$, then 
$$b_{k}(X)=\cdots=b_{2n-k}(X).$$
\end{itemize}
\end{itemize}
\end{theorem}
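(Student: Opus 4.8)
The plan is to prove this as the combination of Theorem~\ref{T2} (the statement for the curvature operator $\mathfrak{R}$) and Theorem~\ref{T4} (the statement for the curvature operator of the second kind $\mathcal{R}$); each of these is a short consequence of Theorem~\ref{T1} followed by an appeal to a Bochner-type vanishing theorem. As a preliminary I would isolate the elementary \emph{monotonicity} of $m$-positivity: if an operator with ordered eigenvalues $\mu_{1}\leq\cdots\leq\mu_{N}$ is $m$-positive, then it is $m'$-positive for every $m\leq m'\leq N-1$. This is immediate, since $\mu_{1}+\cdots+\mu_{\lfloor m\rfloor}+(m-\lfloor m\rfloor)\mu_{\lfloor m\rfloor+1}\leq m\,\mu_{\lfloor m\rfloor+1}$, so $m$-positivity forces $\mu_{\lfloor m\rfloor+1}>0$ and hence $\mu_{j}>0$ for all $j\geq\lfloor m\rfloor+1$; passing from $m$ to $m'$ then only adds nonnegative multiples of such eigenvalues to the defining weighted sum.

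Next I would convert the cone hypothesis into an integer positivity condition. By definition, $g\in\Gamma^{+}_{2}(\a_{\epsilon})$ means that at every point of $X$ the ordered eigenvalue vector $\Lambda_{1}$ of $\mathfrak{R}$ lies in $\Gamma^{+}_{2}(\a_{\epsilon})$, where $N=N_{1}$ and $\a_{\epsilon}=\frac{1}{N}(1-\epsilon)\in(0,\frac{1}{N})$ since $\epsilon\in(0,1)$. By Corollary~\ref{C2}, $\Gamma^{+}_{2}(\a_{\epsilon})\subset\mathcal{P}_{m_{\epsilon}}$ with $0<m_{\epsilon}<N-1$, hence $\Lambda_{1}\in\mathcal{P}_{m_{\epsilon}}$ at every point, which by the Remark identifying membership in $\mathcal{P}_{m}$ with $m$-positivity says exactly that $\mathfrak{R}$ is $m_{\epsilon}$-positive. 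The second-kind statement follows verbatim after replacing $N_{1}$ by $N_{2}$, $\a_{\epsilon}$ by $\b_{\epsilon}$, and the eigenvalue vector of $\mathfrak{R}$ by that of $\mathcal{R}$. Moreover, when $k-1<m_{\epsilon}\leq k$ for an integer $k$, the monotonicity step upgrades $m_{\epsilon}$-positivity to $k$-positivity of $\mathfrak{R}$ (resp.\ $\mathcal{R}$).

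Finally I would feed this integer positivity into the known Bochner estimates. For $\mathfrak{R}$: by Petersen--Wink \cite{PW21a}, $(n-p)$-positivity of $\mathfrak{R}$ with $1\leq p\leq\lfloor\frac{n}{2}\rfloor$ forces $b_{q}(X)=b_{n-q}(X)=0$ for all $q\in\{1,\dots,p\}$, and $\lceil\frac{n}{2}\rceil$-positivity forces $b_{p}(X)=0$ for all $1\leq p\leq n-1$. Thus, for $1\leq k\leq\lceil\frac{n}{2}\rceil$, the operator $\mathfrak{R}$ is $k$-positive, hence $\lceil\frac{n}{2}\rceil$-positive, so all $b_{p}(X)$ vanish; for $\lceil\frac{n}{2}\rceil+1\leq k\leq n-1$ one takes $p=n-k\in\{1,\dots,\lfloor\frac{n}{2}\rfloor\}$ and obtains $b_{1}(X)=\cdots=b_{n-k}(X)=0$ together with $b_{k}(X)=\cdots=b_{n-1}(X)=0$ by Poincar\'{e} duality. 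For $\mathcal{R}$: I would apply the second-kind analogue of this vanishing theorem, due to Nienhaus--Petersen--Wink \cite{NPW23}, according to which $\lfloor\frac{3n}{4}\rfloor$-positivity of $\mathcal{R}$ makes all $b_{p}(X)$ vanish, while $C_{k}(n)$-positivity (for $1\leq k\leq\lfloor\frac{n}{2}\rfloor$) makes $b_{j}(X)=0$ for $k\leq j\leq n-k$; here, if one prefers to start from the degree-by-degree vanishing, one reaches the full range of $j$ using that $C_{k}(n)$ is non-decreasing in $k$ on $\{1,\dots,\lfloor\frac{n}{2}\rfloor\}$ together with monotonicity of $m$-positivity and Poincar\'{e} duality. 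Combining with the previous step, the inequalities $m_{\epsilon}\leq C_{k}(n)$ and $m_{\epsilon}\leq\lfloor\frac{3n}{4}\rfloor$ yield, respectively, the two asserted conclusions for $\mathcal{R}$.

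The weight of the argument lies not in this assembly but in the two inputs it invokes: the cone inclusion $\overline{\Gamma}^{+}_{2}(\a_{\epsilon})\subset\overline{\mathcal{P}}_{m_{\epsilon}}$ of Theorem~\ref{T1}, and the Weitzenb\"{o}ck positivity estimates of \cite{PW21a,NPW23} that translate $m$-positivity of $\mathfrak{R}$ or $\mathcal{R}$ into positivity of the curvature term of the Weitzenb\"{o}ck formula acting on harmonic $p$-forms. Within the combination itself, the one point needing care is the monotonicity of $m$-positivity, since that is exactly what permits replacing the generally non-integer threshold $m_{\epsilon}$ by the integer $k=\lceil m_{\epsilon}\rceil$ appearing in the hypotheses of those estimates.
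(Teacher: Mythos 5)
Your proposal is correct and follows essentially the same route as the paper: Corollary~\ref{C2} (equivalently Theorem~\ref{T1}) gives pointwise $m_{\epsilon}$-positivity of $\mathfrak{R}$ (resp.\ $\mathcal{R}$), the monotonicity of $m$-positivity (the paper's Lemma~\ref{L1}, which you reprove directly) upgrades this to the integer thresholds $k$, $\lceil\frac{n}{2}\rceil$, $\frac{3n}{4}$, or $C_{k}(n)$, and the vanishing then follows from Theorem~A of \cite{PW21a} and the Weitzenb\"ock estimates of \cite{NPW23} (which the paper repackages as its Corollary~\ref{C1}). The only difference is one of detail, not of method: you cite the second-kind vanishing theorem as a black box where the paper rederives it from \cite[Lemma 3.4, Proposition 3.15]{NPW23}.
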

As a second application of Theorem \ref{T1}, we characterize manifolds diffeomorphic to spherical space forms.
\begin{theorem}(Theorem \ref{T3}+Theorem \ref{T5})
Let $(X,g)$ be a compact Riemannian manifold of dimension $n\geq 3$.
\begin{itemize}
\item[(1)] For the curvature operator $\mathfrak{R}$. If $g\in \Gamma^{+}_2(\alpha_\epsilon)$ with
$0<\epsilon\leq\sqrt{\frac{8}{(n^{2}-n-2)(n^{2}-n-4)}}$, then $X$ is diffeomorphic to a spherical space form.
\item[(2)] For the curvature operator of the second kind $\mathcal{R}$. If $g \in \Gamma^{+}_2(\beta_\epsilon)$ with $0<\epsilon\leq\sqrt{\frac{12}{(n^{2}+n-4)(n^{2}+n-8)}}$, then $X$ is diffeomorphic to a spherical space form.
\end{itemize}
\end{theorem}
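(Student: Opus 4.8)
The strategy is to combine Corollary \ref{C2} with two known rigidity theorems: a compact Riemannian manifold whose curvature operator $\mathfrak{R}$ is $2$-positive is diffeomorphic to a spherical space form (B\"{o}hm and Wilking \cite{BW08}, with Chen \cite{Che91} in dimension four), and one whose curvature operator of the second kind $\mathcal{R}$ is $3$-positive is diffeomorphic to a spherical space form (Li \cite{Li24}, building on Cao, Gursky and Tran \cite{CGT23}). The only thing to verify is that the hypotheses on $\epsilon$ are exactly the conditions $m_{\epsilon}\le 2$, respectively $m_{\epsilon}\le 3$, together with the nesting $\mathcal{P}_{m}\subseteq\mathcal{P}_{m'}$ for $m\le m'$.

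For part (1), put $N=N_{1}=\frac{n(n-1)}{2}$. Since $g\in\Gamma^{+}_{2}(\alpha_{\epsilon})$, at each point the ordered eigenvalue vector $\Lambda_{1}$ of $\mathfrak{R}$ lies in $\Gamma^{+}_{2}(\alpha_{\epsilon})$, hence in $\mathcal{P}_{m_{\epsilon}}$ by Corollary \ref{C2}; i.e.\ $\mathfrak{R}$ is $m_{\epsilon}$-positive. Clearing the denominator in $m_{\epsilon}=\frac{N(N-1)\epsilon^{2}}{1+(N-1)\epsilon^{2}}$ gives $m_{\epsilon}\le 2\iff (N-1)(N-2)\epsilon^{2}\le 2$, and since $N_{1}-1=\frac{n^{2}-n-2}{2}$ and $N_{1}-2=\frac{n^{2}-n-4}{2}$, this is precisely $\epsilon^{2}\le\frac{8}{(n^{2}-n-2)(n^{2}-n-4)}$, which is the assumed bound. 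The cones $\mathcal{P}_{m}$ are nested ($\mathcal{P}_{m}\subseteq\mathcal{P}_{m'}$ when $m\le m'$, because an ordered vector with positive weighted partial sum up to index $m$ must have $\mu_{\lceil m\rceil}>0$, hence all later entries positive, so the weighted partial sum is nondecreasing in $m$); therefore $m_{\epsilon}\le 2$ makes $\mathfrak{R}$ $2$-positive, and the theorem of B\"{o}hm and Wilking yields that $X$ is diffeomorphic to a spherical space form.

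Part (2) is identical with $N=N_{2}=\frac{(n-1)(n+2)}{2}$: from $g\in\Gamma^{+}_{2}(\beta_{\epsilon})$ and Corollary \ref{C2}, $\mathcal{R}$ is $m_{\epsilon}$-positive; the target is now $m_{\epsilon}\le 3\iff (N-1)(N-3)\epsilon^{2}\le 3$, and since $N_{2}-1=\frac{n^{2}+n-4}{2}$ and $N_{2}-3=\frac{n^{2}+n-8}{2}$ (both positive for $n\ge 3$) this reads $\epsilon^{2}\le\frac{12}{(n^{2}+n-4)(n^{2}+n-8)}$; nestedness then upgrades this to $3$-positivity of $\mathcal{R}$, and Li's theorem finishes the proof. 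I do not expect any genuine obstacle here: all the geometric depth has been exported to Theorem \ref{T1}/Corollary \ref{C2} and to the cited rigidity results, and the only care needed is the elementary algebra identifying the $\epsilon$-thresholds with $m_{\epsilon}=2$ and $m_{\epsilon}=3$ (the boundary cases, where $m_{\epsilon}$ equals $2$ or $3$ exactly, being harmless, since then $\mathcal{P}_{m_{\epsilon}}$ is literally $\mathcal{P}_{2}$, respectively $\mathcal{P}_{3}$).
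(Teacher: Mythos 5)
Your proposal is correct and follows essentially the same route as the paper: apply Theorem \ref{T1}/Corollary \ref{C2} to get $m_{\epsilon}$-positivity, check that the stated bounds on $\epsilon$ are exactly $m_{\epsilon}\le 2$ (resp.\ $m_{\epsilon}\le 3$) after rewriting $N_{1}-1$, $N_{1}-2$ (resp.\ $N_{2}-1$, $N_{2}-3$) in terms of $n$, use the monotonicity of the cones $\mathcal{P}_{m}$ (the paper's Lemma \ref{L1}) to upgrade to $2$- resp.\ $3$-positivity, and invoke B\"ohm--Wilking/Chen resp.\ Li. The algebra identifying the thresholds is verified correctly, so nothing is missing.
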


\section{Positively of Gårding cone}
\subsection{Gårding cones and its properties}
Let $\Gamma^{+} \subset \mathbb{R}^{N}$ be a nonempty open convex cone with vertex at the origin, and let $\overline{\Gamma}^{+}$ denote its closure. For any nonzero vector $V=(\mu_{1},\cdots,\mu_{N})\in \mathbb{R}^{N}$, we consider the elementary symmetric functions $\sigma_{k}$, $(1\leq k\leq N)$  defined by:
\begin{equation}\label{kth elementary symmetric}
\sigma_{k}(V)=\sum_{1\leq i_1<\cdots< i_{k}\leq N}\mu_{i_1} \cdots  \mu_{i_{k}}.
\end{equation}
In particular, we note the special cases:
\begin{itemize}
\item[(i)] $\sigma_{1}(V)=\mu_{1}+\cdots+\mu_{N}$ (the trace);
\item[(ii)]$\sigma_{n}(V)=\mu_{1}\cdots \mu_{N}$ (the determinant);
\item[(iii)] The quadratic form can be expressed as:
\begin{equation}\label{sigma 2 definition}
\sigma_{2}(V)=\Big(\sum_{i=1}^{N}\mu_{i}\Big)^{2}-\sum_{i=1}^{N}\mu_{i}^{2}.
\end{equation}
\end{itemize}
\begin{definition}(Gårding cones)
For every $1\leq k\leq\mathbb{N}$, we define the Gårding cone:
$$\Gamma^{+}_{k}:=\{ V\in \mathbb{R}^N \,: \,\sigma_{j}(V)>0 \textrm{ for all } j=1,\cdots,k\}.$$
\end{definition}
These cones satisfy the nested inclusion relation:
\begin{equation}\label{Gamma inclusion}
\Gamma^{+}_{N}\subset \cdots \subset \Gamma^{+}_{1}\,.
\end{equation}
Observe that $\Gamma^{+}_{N}$ is the positive orthant, whereas $\Gamma^{+}_{1}$ is a half-space in $\mathbb{R}^{N}$ containing $\Gamma^{+}_{N}$. It is well established that $\Gamma^{+}_{k}$ is an open convex cone in $\mathbb{R}^N$ and that the operator defined by the $k$-th root $\sigma_{k}^{1/k}$ is concave on $\Gamma^{+}_{k}$.

For a given constant $\alpha\geq 0$ and any vector $V=(\mu_{1},\cdots,\mu_{N})\in\mathbb{R}^{N}$, we define its $\a$-shift as
\begin{equation}\label{V alpha}
V_{\alpha}=V-\alpha\,\overline{\mu}\,I_{N}=V-\alpha\,(\overline{\mu},\cdots,\overline{\mu})
\end{equation}
where $\overline{\mu}=\sum_{i=1}^{N}\mu_{i}$ and $I_N=(1,...,1)$ is the unit vector in $\mathbb{R}^{N}$. The $\a$-perturbed Gårding cone is defined by:
\begin{equation}\label{Gamma k alpha}
\Gamma^{+}_{k}(\alpha): =\{V\in\mathbb{R}^{N}\,:\, V_{\alpha}\in \Gamma^{+}_k\}.
\end{equation}
\begin{remark}
\quad\\
(1) When $\a=0$, we recover the original Gårding cones: $\Gamma^{+}_{k}(0)=\Gamma^{+}_{k}$ for every $1\leq k\leq N$.\\
(2) For the perturbed cones, we also have following inclusion chain which analogous to the inclusion chain \eqref{Gamma inclusion}:
\begin{equation}\label{Gamma alpha inclusion}
\Gamma^{+}_{N}(\alpha)\subset \cdots \subset \Gamma^{+}_{1}(\alpha)\,.
\end{equation}
\end{remark}
We now consider another Gårding-type cone
\begin{equation}\label{Pk}
\mathcal{P}_{k}:=\{ V\in \mathbb{R}^N \,: \,\mu_{i_1}+\cdots +\mu_{i_{k}}>0 \textrm{ for all } 1\leq i_1<\cdots< i_{k}\leq N\}.
\end{equation}
 
\begin{remark}
When $V$ represents the eigenvalues of a matrix in $\mathbb{R}^N$:\\
(1) $V\in\mathcal{P}_{k}$ implies at least $(N-k+1)$ positive eigenvalues;\\
(2) $V\in\overline{\mathcal{P}}_{k}$ implies at least $(N-k+1)$ non-negative eigenvalues.
\end{remark}
We also have the following inclusion relations:
\begin{equation}\label{P inclusion}
\Gamma^{+}_{n}=\mathcal{P}_{1}\subset \mathcal{P}_{2}\subset \cdots \mathcal{P}_{N-1}\subset \mathcal{P}_{N}=\Gamma^{+}_{1}\,.
\end{equation}
The aforementioned cone family, commonly referred to as $k$-\textit{convex cone}, was first introduced by Harvey and Lawson \cite{HL13} in their seminal work on $\mathbb{G}$-plurisubharmonicity theory, where they established fundamental results using this framework.
\begin{definition}\label{Pm}
($m$-positivity cone).
For any real parameter $1\leq m\leq N$ (not necessarily integer), we define the $m$-positivity cone:
$$\mathcal{P}_{m}=\{V\in \mathbb{R}^N :  \mu_{i_1}+\cdots +\mu_{i_{ \lfloor m\rfloor}}+(m-\lfloor m\rfloor)\mu_{i_{\lfloor m\rfloor+1}}>0 \textrm{ for all }  1\leq i_1<\cdots< i_{\lfloor m\rfloor+1}\leq N\}.$$
\end{definition}
For integer values $m=k\in\mathbb{N}$, the definition reduces to the standard $k$-\textit{convex cone} $\mathcal{P}_{k}$. The following inclusion relations hold:
\begin{itemize}
\item[(i)]  $\mathcal{P}_{\lfloor m\rfloor}\subset \mathcal{P}_{m}$ for all $1\leq m\leq N$;
\item[(ii)]  $\mathcal{P}_{m}\subset \mathcal{P}_{\lfloor m\rfloor+1}$ for all $1\leq m<N$;
\item[(iii)] $\mathcal{P}_{m_{1}}\subset\mathcal{P}_{m_{2}}$ for any $1\leq m_{1}\leq m_{2}\leq N$ (cf. Lemma \ref{L1} below).
\end{itemize}
The closure $\overline{\mathcal{P}}_{m}$, called the $m$-nonnegativity cone, satisfies analogous properties.
\begin{lemma}\label{L1}
Let $1\leq k_{1}< k_{2}\leq N$ and $\mu_{1}\leq \cdots\leq \mu_{N}$. It holds that
\begin{equation*}
\frac{1}{k_{1}}\Big(\sum_{i=1}^{\lfloor k_{1} \rfloor}\mu_{i}+(k_1-\lfloor k_{1} \rfloor)\mu_{\lfloor k_{1} \rfloor+1} \Big)\leq  \frac{1}{k_{2}}\Big(\sum_{i=1}^{\lfloor k_{2} \rfloor}\mu_{i}+(k_2-\lfloor k_{2} \rfloor)\mu_{\lfloor k_{2} \rfloor+1} \Big)\,.
\end{equation*}
\end{lemma}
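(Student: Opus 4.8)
The plan is to reduce the inequality to the elementary fact that a convex function vanishing at the origin has nondecreasing secant slopes through the origin. To this end, I would introduce the piecewise-linear interpolation
$$f(t) := \sum_{i=1}^{\lfloor t\rfloor}\mu_i + (t - \lfloor t\rfloor)\,\mu_{\lfloor t\rfloor+1}, \qquad t\in[0,N],$$
with the convention that the last summand is dropped when $t=N$ (so that $f(N)=\sum_{i=1}^{N}\mu_i$) and $f(0)=0$. The quantities on the two sides of the claimed inequality are then exactly $f(k_1)/k_1$ and $f(k_2)/k_2$, so it suffices to show that $t\mapsto f(t)/t$ is nondecreasing on $(0,N]$.

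First I would check that $f$ is convex on $[0,N]$. On each subinterval $[j,j+1]$ the function $f$ is affine with slope $\mu_{j+1}$; since $f$ is continuous and the slopes $\mu_1\le\mu_2\le\cdots\le\mu_N$ are nondecreasing as $j$ increases, $f$ is convex. Next, using $f(0)=0$ and writing $k_1=\tfrac{k_1}{k_2}\,k_2+\bigl(1-\tfrac{k_1}{k_2}\bigr)\cdot 0$ for $0<k_1<k_2\le N$, convexity gives $f(k_1)\le\tfrac{k_1}{k_2}\,f(k_2)$, and dividing by $k_1>0$ yields $f(k_1)/k_1\le f(k_2)/k_2$, which is precisely the assertion.

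I do not anticipate a genuine obstacle here: the only point requiring care is the bookkeeping around the floor function and the endpoint $t=N$, where $\mu_{N+1}$ would otherwise appear (harmlessly, with coefficient $0$). Once the interpolation $f$ is set up correctly the argument is immediate. One could alternatively avoid $f$ by an induction that moves $k_1$ up toward $k_2$ in fractional steps, comparing $\frac1k(\cdots)$ with $\frac{1}{k+s}(\cdots)$ directly via $\mu_{\lfloor k\rfloor+1}\le\mu_{\lfloor k\rfloor+2}\le\cdots$, but the convexity packaging is cleaner and treats non-integer $k_1,k_2$ uniformly.
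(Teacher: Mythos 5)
Your proof is correct, and it takes a genuinely different (and cleaner) route than the paper's. The paper proves the inequality by clearing denominators and splitting into two cases according to whether $\lfloor k_1\rfloor=\lfloor k_2\rfloor$ or $\lfloor k_2\rfloor\geq\lfloor k_1\rfloor+1$, then estimating each side directly using $\mu_{i}\leq\mu_{\lfloor k_1\rfloor+1}$ for $i\leq\lfloor k_1\rfloor$ and $\mu_{i}\geq\mu_{\lfloor k_1\rfloor+1}$ for $i\geq\lfloor k_1\rfloor+1$, together with an identity balancing the coefficients. Your argument packages the same underlying monotonicity of the $\mu_i$ into the convexity of the piecewise-linear interpolant $f$ with $f(0)=0$, from which the monotonicity of $t\mapsto f(t)/t$ is immediate; this eliminates the casework and the coefficient bookkeeping entirely, and treats non-integer $k_1,k_2$ uniformly. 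The only details worth stating carefully are the ones you already flag: continuity of $f$ at the integers (so that the affine pieces with nondecreasing slopes $\mu_{j+1}$ genuinely assemble into a convex function) and the harmless appearance of $\mu_{N+1}$ with coefficient zero at $t=N$. Both proofs are elementary; yours is shorter and arguably more transparent, while the paper's is self-contained at the level of explicit inequalities.
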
\begin{proof}
After restating the original inequality as
\[
k_{2}\Big(\sum_{i=1}^{\lfloor k_{1} \rfloor}\mu_{i}+(k_1-\lfloor k_{1} \rfloor)\mu_{\lfloor k_{1} \rfloor+1} \Big)\leq  k_{1}\Big(\sum_{i=1}^{\lfloor k_{2} \rfloor}\mu_{i}+(k_2-\lfloor k_{2} \rfloor)\mu_{\lfloor k_{2} \rfloor+1} \Big)\,.\]

\noindent \textbf{Case 1.} Assume $\lfloor k_{1} \rfloor=\lfloor k_{2} \rfloor=k$. 
The inequality reduces to
\[
(k_2-k_1)\sum_{i=1}^{k}\mu_{i}\leq (k_2-k_{1})k\mu_{k+1}.
\]
This holds trivially due to the non-increasing property of $\{\mu_{i}\}_{i=1}^{N}$.\medskip

\noindent \textbf{Case 2.} When $\lfloor k_2]\geq \lfloor k_{1} \rfloor+1$, it suffices to prove
\begin{equation}\label{monotone}
k_{1}\sum_{i=\lfloor k_{1} \rfloor+1}^{\lfloor k_{2} \rfloor}\mu_{i}+k_{1}(k_2-\lfloor k_{2} \rfloor)\mu_{\lfloor k_{2} \rfloor+1}\geq (k_2-k_1)\sum_{i=1}^{\lfloor k_{1} \rfloor}\mu_{i}+k_{2}(k_1-\lfloor k_{1} \rfloor)\mu_{\lfloor k_{1} \rfloor+1}\,.
\end{equation}
By the non-increasing property of $\{\mu_{i}\}_{i=1}^{N}$ and
\[
k_{1}(\lfloor k_{2} \rfloor -\lfloor k_{1} \rfloor )+k_{1}(k_2-\lfloor k_{2} \rfloor)=(k_2-k_1)\lfloor k_{1} \rfloor+k_{2}(k_1-\lfloor k_{1} \rfloor)=k_1(k_2-\lfloor k_{1} \rfloor)\,,
\]
we have
\begin{align*}
\textrm{LHS of }\, \eqref{monotone}\geq & \, k_1(k_2-\lfloor k_{1} \rfloor)\mu_{\lfloor k_{1} \rfloor+1}\,,\\[2mm]
\textrm{RHS of }\, \eqref{monotone}\leq & \, k_1(k_2-\lfloor k_{1} \rfloor)\mu_{\lfloor k_{1} \rfloor+1}\,.
\end{align*}
Combining these two inequalities, yields the desired inequality \eqref{monotone}.
\end{proof}

\subsection{Non-negativity of the shifted cone $\Gamma^{+}_{2}(\alpha)$}
In this subsection, we investigate the non-negativity properties of the shifted cone $\Gamma^{+}_{2}(\alpha)$ for parameters $0<\a<\frac{1}{N}$. For our analysis, we fix $\epsilon\in(0,1)$ and define the key parameters:
\begin{equation}\label{alpha m}
\alpha_\epsilon=\frac{1}{N}(1-\epsilon), \quad
m_\epsilon = \frac{N(N-1)\epsilon^2}{1+(N-1)\epsilon^2}
\end{equation}
\begin{remark}
The function $m_{\epsilon}$ is strictly increasing in $\epsilon\in(0,1)$ and the range satisfies $m_{\epsilon}\in(0, N-1)$.
\end{remark}
The $\a_\epsilon$-shifted vector of $V$ is given by:
$$V_{\alpha_{\epsilon}}=V-\alpha_{\epsilon}\,\overline{\mu}\,I_{N}=V-\alpha_{\epsilon}\,(\overline{\mu},\cdots,\overline{\mu}),$$
where $I_N = (1,...,1)\in\mathbb{R}^{N}$ and $\bar{\mu}=\sum_{i=1}^{N}\mu_{i}$. For every integer $1\leq k\leq N-1$, Yang and Zhang \cite{YZ25} considered the shifted vector $V_{\a_{k}}$ with
$$\alpha_{k}=\frac{1}{N}\Big(1-\sqrt{\frac{k}{(N-1)(N-k)}}\Big).$$
They proved the following non-negatively of cone $\Gamma^{+}_{2}(\a_{k})$.
\begin{proposition}(\cite[Lemma 2.4]{YZ25})
For $1\leq k\leq N-1$, if $V\in\overline{\Ga}^{+}_{2}(\a_{k})$, then
$$\mu_{1}+\mu_{2}+\cdots+\mu_{k}\geq0.$$
Moreover, exactly one of the followings holds.\\
(1) $\mu_{1}+\mu_{2}+\cdots+\mu_{k}>0$.\\
(2) $\mu_{1}=\mu_{2}=\cdots=\mu_{k}=0$ and $\mu_{k+1}=\mu_{k+2}=\cdots=\mu_{N}\geq0$.
\end{proposition}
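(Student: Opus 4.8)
The plan is to convert membership in $\overline{\Gamma}^{+}_{2}(\alpha_{k})$ into two explicit scalar inequalities for the quantities $\bar\mu=\sum_{i}\mu_{i}$ and $s=\sum_{i}\mu_{i}^{2}$, and then to extract the $\mathcal{P}_{k}$-inequality from a single application of the Cauchy--Schwarz inequality to the $N-k$ largest coordinates. The particular value of $\alpha_{k}$ enters only through one algebraic identity, namely $1+(N-1)\epsilon_{k}^{2}=\tfrac{N}{N-k}$ with $\epsilon_{k}=\sqrt{k/((N-1)(N-k))}$, which is what makes the second inequality collapse to the clean form $s\le\bar\mu^{2}/(N-k)$.

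First I would compute the first two elementary symmetric functions of the shifted vector $V_{\alpha_{k}}=V-\alpha_{k}\bar\mu\,I_{N}$ directly. Using \eqref{sigma 2 definition} one finds $\sigma_{1}(V_{\alpha_{k}})=(1-N\alpha_{k})\bar\mu$ and $\sigma_{2}(V_{\alpha_{k}})=\big((1-N\alpha_{k})^{2}-N\alpha_{k}^{2}+2\alpha_{k}\big)\bar\mu^{2}-s$. Substituting $\alpha_{k}=\tfrac{1}{N}(1-\epsilon_{k})$ and simplifying, the bracket becomes $\tfrac{1+(N-1)\epsilon_{k}^{2}}{N}=\tfrac{1}{N-k}$, while $1-N\alpha_{k}=\epsilon_{k}>0$. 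Since $V\in\overline{\Gamma}^{+}_{2}(\alpha_{k})$ forces $\sigma_{1}(V_{\alpha_{k}})\ge 0$ and $\sigma_{2}(V_{\alpha_{k}})\ge 0$ (the map $V\mapsto V_{\alpha_{k}}$ is a linear homeomorphism and $\sigma_{1},\sigma_{2}$ are continuous), we obtain exactly
\[
\bar\mu\ \ge\ 0\qquad\text{and}\qquad \sum_{i=1}^{N}\mu_{i}^{2}\ \le\ \frac{\bar\mu^{2}}{N-k}\,.
\]

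Next, since the cone is invariant under permutations of the coordinates we may assume $\mu_{1}\le\cdots\le\mu_{N}$, and set $S=\mu_{1}+\cdots+\mu_{k}$, so that $\mu_{k+1}+\cdots+\mu_{N}=\bar\mu-S$ is a sum of $N-k$ real numbers. Cauchy--Schwarz together with the two inequalities above gives
\[
(\bar\mu-S)^{2}\ =\ \Big(\sum_{i=k+1}^{N}\mu_{i}\Big)^{2}\ \le\ (N-k)\sum_{i=k+1}^{N}\mu_{i}^{2}\ \le\ (N-k)\sum_{i=1}^{N}\mu_{i}^{2}\ \le\ \bar\mu^{2}\,,
\]
and since $\bar\mu\ge 0$ this forces $|\bar\mu-S|\le\bar\mu$, i.e. $0\le S\le 2\bar\mu$. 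In particular $\mu_{1}+\cdots+\mu_{k}=S\ge 0$, which is the asserted inequality (equivalently $V\in\overline{\mathcal{P}}_{k}$).

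Finally, for the dichotomy I would examine the case $S=0$: then every inequality in the last display is an equality. Equality in the rightmost step gives $\sum_{i}\mu_{i}^{2}=\bar\mu^{2}/(N-k)$; equality in $\sum_{i>k}\mu_{i}^{2}\le\sum_{i}\mu_{i}^{2}$ forces $\mu_{1}=\cdots=\mu_{k}=0$; and equality in Cauchy--Schwarz forces $\mu_{k+1}=\cdots=\mu_{N}$, whose common value is $\ge\mu_{k}=0$. This is precisely alternative (2), and it is incompatible with (1), so exactly one of the two holds. I expect no serious obstacle: the only step requiring care is the algebraic reduction of the first paragraph (together with the bookkeeping that $\alpha_{k}\in[0,\tfrac1N)$, the boundary value $\alpha_{N-1}=0$ corresponding to the unshifted cone $\Gamma^{+}_{2}$), while the Cauchy--Schwarz step and its equality analysis are routine; the same argument, run with a weighted Cauchy--Schwarz over the coordinates $\mu_{\lfloor m\rfloor+1},\dots,\mu_{N}$ carrying weights $1-(m-\lfloor m\rfloor),1,\dots,1$, is what upgrades this to the non-integer parameter $m_{\epsilon}$ of Theorem~\ref{T1}.
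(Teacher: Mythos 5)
Your proof is correct and follows essentially the same route as the paper's proof of the more general Proposition~\ref{cone inclusion} and Theorem~\ref{T1} (specialized to integer $m=k$): compute $\sigma_{2}(V_{\alpha})$ to obtain the quadratic constraint $\sum_{i}\mu_{i}^{2}\leq\frac{1+(N-1)\epsilon^{2}}{N}\,\overline{\mu}^{2}$, then apply Cauchy--Schwarz to the top $N-k$ coordinates and analyze the equality case. The only (harmless) differences are organizational: you absorb the case $\overline{\mu}=0$ into the inequality $0\leq S\leq 2\overline{\mu}$ instead of treating it separately as in the paper's Step~2, and you conclude from $(\overline{\mu}-S)^{2}\leq\overline{\mu}^{2}$ directly rather than via the paper's intermediate inequality \eqref{E1}.
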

For any constant $\epsilon\in(0,1)$, we establish the following inclusion relation through an approach distinct from others.
\begin{proposition}\label{cone inclusion}
For every $0<\epsilon<1$, we have the inclusion relation:
\begin{equation}
\overline{\Gamma}^{+}_{2}(\alpha_{\epsilon})\subset \overline{\mathcal{P}}_{m_{\epsilon}}\,.
\end{equation}
\end{proposition}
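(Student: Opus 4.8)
The plan is to translate the two defining inequalities of $\overline{\Gamma}^{+}_{2}(\alpha_{\epsilon})$ into a single quadratic ($L^{2}$) bound on the coordinates of $V$, and then to read off membership in $\overline{\mathcal{P}}_{m_{\epsilon}}$ from one application of the Cauchy--Schwarz inequality; the constants defining $\alpha_{\epsilon}$ and $m_{\epsilon}$ turn out to be precisely calibrated so that the estimate closes with equality.

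First I would fix $V=(\mu_{1},\dots,\mu_{N})\in\overline{\Gamma}^{+}_{2}(\alpha_{\epsilon})$ and, since both cones are invariant under permutations of the coordinates, assume $\mu_{1}\leq\cdots\leq\mu_{N}$. Writing $\bar{\mu}=\sum_{i=1}^{N}\mu_{i}$ and $W=V_{\alpha_{\epsilon}}$, a direct computation gives $\sigma_{1}(W)=(1-N\alpha_{\epsilon})\bar{\mu}=\epsilon\,\bar{\mu}$, while \eqref{sigma 2 definition} together with $\sum_{i}(\mu_{i}-\alpha_{\epsilon}\bar{\mu})^{2}=\sum_{i}\mu_{i}^{2}-(2\alpha_{\epsilon}-N\alpha_{\epsilon}^{2})\bar{\mu}^{2}$ and the elementary identity $\epsilon^{2}+2\alpha_{\epsilon}-N\alpha_{\epsilon}^{2}=\frac{1+(N-1)\epsilon^{2}}{N}$ (obtained by substituting $\alpha_{\epsilon}=\tfrac{1}{N}(1-\epsilon)$) yields
$$\sigma_{2}(W)=\frac{1+(N-1)\epsilon^{2}}{N}\,\bar{\mu}^{2}-\sum_{i=1}^{N}\mu_{i}^{2}.$$
Consequently the pair of conditions $\sigma_{1}(W)\geq 0$, $\sigma_{2}(W)\geq 0$ defining $\overline{\Gamma}^{+}_{2}(\alpha_{\epsilon})$ is equivalent to $\bar{\mu}\geq 0$ and $\sum_{i=1}^{N}\mu_{i}^{2}\leq\frac{\bar{\mu}^{2}}{N-m_{\epsilon}}$, where I have used the identity $N-m_{\epsilon}=\frac{N}{1+(N-1)\epsilon^{2}}$, equivalently $(N-m_{\epsilon})\cdot\frac{1+(N-1)\epsilon^{2}}{N}=1$.

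Next I would set $k=\lfloor m_{\epsilon}\rfloor$ and $t=m_{\epsilon}-k\in[0,1)$; since $m_{\epsilon}\in(0,N-1)$ one has $k\leq N-2$, so the index $k+1$ is well defined and $\leq N$. As the $\mu_{i}$ are ordered, the membership $V\in\overline{\mathcal{P}}_{m_{\epsilon}}$ is equivalent to the single inequality $A:=\mu_{1}+\cdots+\mu_{k}+t\,\mu_{k+1}\geq 0$, because $(1,2,\dots,k+1)$ minimizes $\mu_{i_{1}}+\cdots+\mu_{i_{k}}+t\mu_{i_{k+1}}$ over all tuples $1\leq i_{1}<\cdots<i_{k+1}\leq N$. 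If $\bar{\mu}=0$ the quadratic bound forces $V=0$ and there is nothing to prove, so assume $\bar{\mu}>0$ and put $B:=\bar{\mu}-A=(1-t)\mu_{k+1}+\mu_{k+2}+\cdots+\mu_{N}$. Applying the Cauchy--Schwarz inequality to the inner product of $(\sqrt{1-t},1,\dots,1)\in\mathbb{R}^{N-k}$ with $(\sqrt{1-t}\,\mu_{k+1},\mu_{k+2},\dots,\mu_{N})$, whose squared norms are $(1-t)+(N-k-1)=N-m_{\epsilon}$ and $(1-t)\mu_{k+1}^{2}+\sum_{i\geq k+2}\mu_{i}^{2}\leq\sum_{i=1}^{N}\mu_{i}^{2}$, gives
$$B\leq\sqrt{N-m_{\epsilon}}\;\Big(\sum_{i=1}^{N}\mu_{i}^{2}\Big)^{1/2}\leq\sqrt{N-m_{\epsilon}}\cdot\frac{\bar{\mu}}{\sqrt{N-m_{\epsilon}}}=\bar{\mu}.$$
Hence $A=\bar{\mu}-B\geq 0$, which is exactly the assertion $V\in\overline{\mathcal{P}}_{m_{\epsilon}}$.

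I do not expect a genuine obstacle beyond getting the reformulation right: the whole argument hinges on the algebraic identities $\epsilon^{2}+2\alpha_{\epsilon}-N\alpha_{\epsilon}^{2}=\frac{1+(N-1)\epsilon^{2}}{N}$ and $(N-m_{\epsilon})\cdot\frac{1+(N-1)\epsilon^{2}}{N}=1$, which are what make the Cauchy--Schwarz bound collapse precisely to $\bar{\mu}$, and on the ordering observation that reduces $\overline{\mathcal{P}}_{m_{\epsilon}}$ to a single inequality. Finally, tracking the equality case of Cauchy--Schwarz---namely $\mu_{k+1}=\mu_{k+2}=\cdots=\mu_{N}$ together with saturation of the quadratic constraint---would recover the rigidity dichotomy (1)--(2) in Theorem \ref{T1}.
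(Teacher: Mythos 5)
Your argument is correct and follows essentially the same route as the paper's proof: the same identity $2\sigma_{2}(V_{\alpha_{\epsilon}})=\frac{1+(N-1)\epsilon^{2}}{N}\bar{\mu}^{2}-\sum_{i}\mu_{i}^{2}$, the same reduction to $\bar{\mu}>0$, and the same Cauchy--Schwarz estimate on the weighted tail $(\sqrt{1-t}\,\mu_{k+1},\mu_{k+2},\dots,\mu_{N})$ (the paper's vectors $\alpha,\beta$ are yours up to reordering of components). The only difference is cosmetic: the paper retains the head term $T_{1}=\sum_{i\leq k}\mu_{i}^{2}+t\mu_{k+1}^{2}$ explicitly so that the resulting inequality \eqref{E1} also delivers the rigidity dichotomy of Theorem \ref{T1}, whereas you drop it and note the equality analysis only in passing.
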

\begin{proof}
The proof consists of several steps.\medskip

\noindent\textbf{Step 1.}  Fix an arbitrary vector $V$ with $V_{\alpha_{\epsilon}}\in \overline{\Gamma}^{+}_{2}$. Through direct computation, we establish 
\begin{equation*}
\begin{split}
2\sigma_{2}(V_{\alpha_{\epsilon}})&=\big(\sigma_{1}(V_{\alpha_{\epsilon}})\big)^2-\sum_{i=1}^{N}(\mu_{i}-\alpha_{\epsilon}\overline{\mu})^2\\
&=(1-N\alpha_{\epsilon})^{2}\overline{\mu}^{2}-\sum_{i=1}^{N}\big(\mu_{i}^{2}-2\alpha_{\epsilon}\overline{\mu}\mu_{i}+\alpha_{\epsilon}^{2}\overline{\mu}^{2} \big)\\
&=\big[(1-N\alpha_{\epsilon})^{2}+2\alpha_{\epsilon}+N\alpha_{\epsilon}^{2} \big]\overline{\mu}^{2}-\sum_{i=1}^{N}\mu_{i}^{2}\,.
\end{split}
\end{equation*}
For the first term, using the definition of $\alpha_{k}$, we derive
\begin{equation*}
\begin{split}
&(1-N\alpha_{\epsilon})^{2}+2\alpha_{\epsilon}+N\alpha_{\epsilon}^{2}\\
&=N(N-1)\alpha_{\epsilon}^{2}-2(N-1)\alpha_{\epsilon}+1\\
&=N(N-1)\big[\alpha_{\epsilon}^{2}-\frac{2}{N}\alpha_{\epsilon}+\frac{1}{N(N-1)} \big]\\
&=N(N-1)\big[(\alpha_{\epsilon}-\frac{1}{N})^{2}+\frac{1}{N^{2}(N-1)} \big]\\
&=N(N-1)\big[\frac{\epsilon^{2}}{N^{2}}+\frac{1}{N^{2}(N-1)} \big]\\
&=\frac{1+(N-1)\epsilon^2}{N}\,.
\end{split}
\end{equation*}
This establishes the fundamental inequality 
\begin{equation}\label{sigma 2}
\frac{1+(N-1)\epsilon^2}{N}\overline{\mu}^{2}\geq \sum_{i=1}^{N}\mu_{i}^{2}\,.
\end{equation}

\noindent\textbf{Step 2.} We first assume $\overline{\mu}=0$.
Since $\sigma_{2}(V_{\alpha_{\epsilon}})\geq 0$,  the inequality (\ref{sigma 2}) forces $\mu_{i}=0$ for all $i$, implying $V=0$. Thus $V\in \overline{\mathcal{P}}_{m_{\epsilon}}$ trivially holds. Consequently, we may assume $\overline{\mu}>0$ in subsequent arguments. 

\noindent\textbf{Step 3.} We demonstrate that any vector $V = (\mu_{1}, \cdots, \mu_{N})$ satisfying both $\overline{\mu}>0$ and \eqref{sigma 2} belongs to $\overline{\mathcal{P}}_{m_{\epsilon}}$.
\begin{proof}[Proof of Claim]

Let $m = m_{\epsilon}$ and assume without loss of generality that $\mu_{1} \leq \cdots \leq \mu_{N}$. 
 It is sufficient to prove
\begin{equation}\label{k positive'}
c_{0}:=\sum_{i=1}^{\lfloor m \rfloor}\mu_{i}+\big(m-\lfloor m \rfloor\big)\mu_{\lfloor m \rfloor+1}\geq 0\,.
\end{equation}
Decompose the right-hand side of \eqref{sigma 2} using the index $[m]$
$$T_{1}:=\sum_{i=1}^{[m]}\mu_{i}^{2}+\big(m-[m]\big)\mu_{[m+1]}^{2} \quad \textrm{ and } \quad T_{2}:=\sum_{i=[m+2]}^{N}\mu_{i}^{2}+\big([m+1]-m\big)\mu_{[m+1]}^{2}.$$
Denote vectors in $\mathbb{R}^{N - \lfloor m \rfloor}$:
$$ \alpha=(1,\cdots,1,\sqrt{[m+1]-m})\,, \quad \beta=(\mu_{[m+2]}, \cdots, \mu_{N}, \sqrt{[m+1]-m}\mu_{[m+1]}).$$ 
These satisfy
$$|\alpha|^2=N-m\,, \qquad |\beta|^2=T_{2}\,, \qquad \langle \alpha,\beta\rangle=\overline{\mu}-c_{0}.$$
Applying the Cauchy-Schwarz inequality $|\langle \alpha,\beta\rangle|^{2}\leq |\alpha|^{2}|\beta|^{2}$ yields: 
$$T_{2}\geq \frac{(\overline{\mu}-c_{0})^{2}}{N-m}=\frac{1+(N-1)\epsilon^2}{N}(\overline{\mu}-c_{0})^{2}.$$
Substituting into \eqref{sigma 2} gives:
$$\frac{1+(N-1)\epsilon^2}{N}\overline{\mu}^{2}\geq T_{1}+\frac{1+(N-1)\epsilon^2}{N}(\overline{\mu}-c_{0})^{2}.$$
Consequently, we obtain the key inequality
\begin{equation}\label{E1}
\frac{2 + 2(N-1)\epsilon^2}{N}\overline{\mu}c_{0} \geq T_{1} + \frac{1 + (N-1)\epsilon^2}{N}c_{0}^{2} \geq 0.
\end{equation}
The non-negativity of $c_{0}$ follows from $\overline{\mu} > 0$, establishing \eqref{k positive'}. 
\end{proof}
The preceding arguments complete the proof of Proposition \ref{cone inclusion}.
\end{proof}
Let us now proceed to prove Theorem \ref{T1} and Corollary 1.2.
\begin{proof}[\textbf{Proof of Theorem \ref{T1}}]
Proposition \ref{cone inclusion} yields that
$$\mu_{1}+\cdots+\mu_{\lfloor  m\rfloor}+\big(m-\lfloor m\rfloor\big)\mu_{\lfloor m\rfloor+1}\geq0.$$
If equality holds, then \eqref{E1} implies
$$T_{1}=\sum_{i=1}^{\lfloor m\rfloor}\mu_{i}^{2}+\big(m-\lfloor m\rfloor\big)\mu_{\lfloor m\rfloor+1}^{2}=0,$$
which necessitates
$$\mu_{1}=\cdots=\mu_{\lfloor m\rfloor}=\big(m-\lfloor m\rfloor\big)\mu_{\lfloor m\rfloor+1}=0.$$
From \eqref{sigma 2}, we derive
\begin{equation*}
\frac{1}{N-m}\overline{\mu}^{2}\geq\sum_{i=\lfloor m\rfloor+2}^{N}\mu_{i}^{2}+\big(\lfloor m\rfloor+1-m\big)\mu^{2}_{\lfloor m\rfloor+1}.
\end{equation*}
When $m \notin \mathbb{N}$, we have $\mu_{\lfloor m \rfloor + 1} = 0$ (since $m\in(1,N-1)$) and $\lfloor m \rfloor + 1 \in [1, N-1]$, leading to
\begin{equation*}
\begin{split}
\frac{1}{N-m}\overline{\mu}^{2}&\geq\sum_{i=\lfloor m\rfloor+2}^{N}\mu_{i}^{2}\\
&\geq\frac{1}{N-(\lfloor m\rfloor+1)}\Big(\sum_{i=\lfloor m\rfloor+2}^{N}\mu_{i}\Big)^2\\
&=\frac{\bar{\mu}^{2}}{N-(\lfloor m\rfloor+1)}.
\end{split}
\end{equation*}
This implies $\overline{\mu} = 0$, contradicting our initial assumption. Therefore, $m$ must be a positive integer.

For $m \in \mathbb{N}^{+}$, we conclude $\mu_{1} = \cdots = \mu_{m} = 0$, and by similar reasoning, $\mu_{m+1} = \cdots = \mu_{N} > 0$. Furthermore, $\sigma_{2}(\alpha_{\epsilon})=0$ when $c_{0}=0$.
\end{proof}
\begin{proof}[\textbf{Proof of Corollary 1.2}]
For every $V= (\mu_{1}, \dots, \mu_{N}) \in \Gamma^{+}_{\alpha_{\varepsilon}} $, i.e., satisfying $\sigma_{2}(V_{\alpha_{\varepsilon}}) > 0$ and $ \sum_{i=1}^{N} \mu_{i} > 0$, Theorem \ref{T1} implies that $V$ must satisfy one of the following conditions.
\begin{itemize}
\item[(i)] Either the inequality
$$\mu_{1}+\cdots+\mu_{\lfloor  m_{\epsilon}\rfloor}+\big(m_{\epsilon}-\lfloor m_{\epsilon}\rfloor\big)\mu_{\lfloor m_{\epsilon}\rfloor+1}>0$$
holds, meaning $V \in \mathcal{P}_{m_{\varepsilon}}$.
\item[(ii)] Or $m_{\epsilon} \in \mathbb{N}^{+}$, $\mu_{1} = \mu_{2} = \cdots = \mu_{m_{\epsilon}} = 0$, and $\mu_{m_{\epsilon} + 1} = \mu_{m_{\epsilon} + 2} = \cdots = \mu_{N} > 0$. Then we have  $\sigma_{2}(\alpha_{\varepsilon}) = 0$.
\end{itemize}
Consequently, we conclude that $\Gamma^{+}_{2}(\alpha_{\epsilon}) \subset \mathcal{P}_{m_{\epsilon}}$.

\end{proof}

\section{Positivity of various curvature operators}
\subsection{Curvature operator}
Let $(X,g)$ be a closed $n$-dimensional Riemannian manifold ($n \geq 3$), and let $\mathfrak{X}(X)$ denote the space of smooth vector fields on $X$. The Riemannian connection $\nabla : \mathfrak{X}(X) \times \mathfrak{X}(X) \to \mathfrak{X}(X)$ defines the curvature tensor $R$ of the Riemannian metric $g$ by
$$
R(U,V)W = \nabla_V \nabla_U W - \nabla_U \nabla_V W + \nabla_{[U,V]} W
$$
for all $U,V,W \in \mathfrak{X}(X)$. The Weitzenb\"{o}ck curvature operator, denoted by $\mathfrak{Ric}$ throughout this paper, is the zeroth-order curvature term appearing in the classical Weitzenb\"ock formula. For a smooth $k$-tensor $T$ on $X$, $\mathfrak{Ric}$ is defined as
\[
\mathfrak{Ric}(T)(X_{1},\dots,X_{k}) = \sum_{i=1}^{k} \sum_{j=1}^{n} \left( R(X_{i}, e_{j}) T \right)(X_{1}, \dots, \underset{\substack{\uparrow \\ i\text{-th}}}{e_j}, \dots, X_{k}),
\]
where $\{e_j\}_{j=1}^n$ is a local orthonormal frame of $TX$, and $X_1, \dots, X_k \in \mathfrak{X}(X)$ are smooth vector fields. This operator generalizes the classical Ricci tensor, reducing to standard Ricci curvature when applied to vector fields or $1$-forms.
 
Recall that the Hodge Laplacian is defined as 
\[
\Delta_{d} = d d^* + d^* d.
\]
For a smooth $k$-form $\omega$ on $X$, the classical Weitzenb\"ock formula states
\[
\Delta_{d} \omega = \nabla^* \nabla \omega + \mathfrak{Ric}(\omega),
\]
where $\nabla^* \nabla$ denotes the connection Laplacian. When $\omega$ is harmonic (i.e. $\Delta_{d} \omega = 0$), it satisfies the following Bochner identity 
\begin{equation}
\frac{1}{2} \Delta_{d} |\omega|^2 = |\nabla \omega|^2 + \langle \mathfrak{Ric}(\omega), \omega \rangle.
\end{equation}
For a detailed introduction on the Weitzenb\"ock curvature operator, see \cite[Chapter 9]{Pet16}.

Recall that the curvature operator $\mathfrak{R} \colon \wedge^2 TX \to \wedge^2 TX$ acting on $2$-forms is defined by
\[
(\mathfrak{R}(\omega))_{ij} = \sum_{k,l=1}^{n} R_{ijkl} \omega_{kl},
\]
Let $\Lambda=(\lambda_{1},\cdots,\lambda_{N_{1}})$  be the ordered eigenvalues of $\mathfrak{R}$. Then the scalar curvature $R$ of the metric $g$ satisfies \eqref{R1}. For a fixed $\a\in [0,\frac{1}{N})$, we define a shifted vector in $\mathbb{R}^{N}$ via
\begin{equation*}
\Lambda_{\alpha}:=\La-\a\Big(\frac{R}{2},\frac{R}{2},\cdots,\frac{R}{2}\Big).
\end{equation*}
Note that $\Lambda_{0}=\Lambda$ recovers the original curvature eigenvalues.
\begin{itemize}
\item[(i)] First Shifted Cone Condition $\Gamma^{+}_{1}(\alpha)$:
A metric $g$ belongs to $\Gamma^{+}_{1}(\alpha)$ if and only if
\begin{equation}\label{E5}
\sigma_{1}(V_{\alpha})=\sum_{i=1}^{N}\Big(\lambda_{i}-\frac{\a R}{2}\Big)=\frac{(1-\a N)}{2}R>0.
\end{equation}
\item[(ii)] Second Shifted Cone Condition $\Gamma^{+}_{2}(\alpha)$:
The stricter condition $g\in\Gamma^{+}_{2}(\alpha)$ requires additionally that
\begin{equation}\label{E6}
\sigma_{2}(V_{\alpha})=\sum_{1\leq i<i\leq N}\Big(\lambda_{i}-\frac{R\alpha}{2}\Big)\Big(\lambda_{j}-\frac{R\alpha}{2}\Big)>0.
\end{equation}
\end{itemize}
We demonstrate that for certain geometrically meaningful values of $\a$, the positivity conditions $\sigma_{1}(V_{\alpha})>0$ and $\sigma_{2}(V_{\alpha})>0$ encode significant geometric information about $(X,g)$.

From equation (\ref{E5}), we immediately infer that the scalar curvature $R$ must be non-negative. Of particular interest is the parameter choice $\a_{\epsilon}=\frac{1}{N}(1-\epsilon)$, where $\epsilon\in(0,1)$. This leads us to study the curvature positivity condition defined by the shifted cone $\Gamma^{+}_{2}(\a_{\epsilon})$. 
\begin{theorem}\label{T2}
Let $(X,g)$ be a compact Riemannian manifold of dimension $n\geq 3$. If
$g\in\Gamma^{+}_{2}(\a_{\epsilon})$ for some $\epsilon\in(0,1)$, then the curvature operator $\mathfrak{R}$ of $(X,g)$ is $m_{\epsilon}$-positive, i.e., 
$$\lambda_{1}+\cdots+\lambda_{\lfloor m_{\epsilon}\rfloor}+(m_{\epsilon}-\lfloor m_{\epsilon}\rfloor)\lambda_{\lfloor m_{\epsilon}\rfloor+1 }>0.$$
Moreover, the Betti numbers of $X$ satisfy the following statements.
\begin{itemize}
\item[(i)] When $k-1<m_{\epsilon}\leq k$  for some integer $1\leq k\leq \lceil\frac{n}{2}\rceil$, then
 $$b_{p}(X)=0\ {\rm{for\ all}}\ 1\leq p\leq n-1.$$
\item[(ii)] When $k-1<m_{\epsilon}\leq k$  for some integer $ \lceil\frac{n}{2}\rceil+1\leq k\leq n-1$, then
$$b_{1}(X)=\cdots=b_{n-k}(X)=0\quad and\quad b_{k}(X)=\cdots=b_{n-1}(X)=0.$$
\end{itemize}
\end{theorem}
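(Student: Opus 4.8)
The first assertion—that $g \in \Gamma^{+}_{2}(\alpha_{\epsilon})$ implies $\mathfrak{R}$ is $m_{\epsilon}$-positive—is essentially immediate from the work already done: by definition $g \in \Gamma^{+}_{2}(\alpha_{\epsilon})$ means the shifted eigenvalue vector $\Lambda_{\alpha_{\epsilon}}$ lies in $\Gamma^{+}_{2}(\alpha_{\epsilon})$ (as a vector in $\mathbb{R}^{N_{1}}$, using $\sigma_{1}(V)/2 \cdot 2 = R = 2\sum\lambda_{i}$ so the shift by $\alpha_{\epsilon}\frac{R}{2}$ coincides with the vectorial $\alpha_{\epsilon}$-shift). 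Then Corollary \ref{C2} gives $\Lambda_{1} \in \mathcal{P}_{m_{\epsilon}}$, which by the Remark following the definition of $m$-positivity is exactly the statement that $\mathfrak{R}$ is $m_{\epsilon}$-positive. I would spell out this one-line reduction carefully, checking that the scalar-curvature normalization \eqref{R1} makes the two notions of ``$\alpha_{\epsilon}$-shift'' agree.

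For the Betti number vanishing, the plan is to invoke the Bochner technique via the Weitzenböck operator $\mathfrak{Ric}$ together with the Petersen--Wink-type estimates. The key analytic input (which I would cite from \cite{PW21a}, or reprove) is: if a harmonic $p$-form $\omega$ exists on a compact $X$, then at a maximum point of $|\omega|^{2}$ the Bochner identity forces $\langle \mathfrak{Ric}(\omega), \omega\rangle \le 0$ pointwise somewhere; and the eigenvalues of $\mathfrak{Ric}$ acting on $p$-forms are controlled by partial sums of the eigenvalues $\lambda_{1} \le \cdots \le \lambda_{N_{1}}$ of $\mathfrak{R}$. More precisely, the relevant algebraic lemma states that $(n-p)$-positivity of $\mathfrak{R}$ (equivalently $\Lambda_{1} \in \mathcal{P}_{n-p}$) implies $\mathfrak{Ric}|_{\Lambda^{p}TX} > 0$, hence $b_{p}(X) = 0$ by Bochner; and for $\lfloor n/2\rfloor$-positivity one gets vanishing for all $1 \le p \le n-1$ via Poincaré duality $b_{p} = b_{n-p}$. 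I would then translate $k-1 < m_{\epsilon} \le k$: since $\mathcal{P}_{m_{\epsilon}} \subset \mathcal{P}_{\lfloor m_{\epsilon}\rfloor + 1} \subset \mathcal{P}_{k}$ (inclusion (ii) after Definition \ref{Pm} together with monotonicity (iii)), $m_{\epsilon}$-positivity implies $k$-positivity. So in case (i), $k \le \lceil n/2\rceil$ gives that $\mathfrak{R}$ is $\lceil n/2\rceil$-positive, hence $(n-p)$-positive for every $1 \le p \le n-1$ (as $n - p \ge \lceil n/2 \rceil$ when $p \le \lfloor n/2\rfloor$, and the remaining $p$ are handled by duality), yielding $b_{p}(X) = 0$ for all such $p$. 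In case (ii) with $\lceil n/2\rceil + 1 \le k \le n-1$, $k$-positivity gives $(n-p)$-positivity for $n - p \ge k$, i.e. $1 \le p \le n-k$, so $b_{1} = \cdots = b_{n-k} = 0$; Poincaré duality $b_{p} = b_{n-p}$ then upgrades this to $b_{k} = \cdots = b_{n-1} = 0$.

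The main obstacle is not the Bochner machinery itself—that is classical and the precise eigenvalue estimate for $\mathfrak{Ric}$ in terms of partial sums of curvature-operator eigenvalues is exactly the content of \cite{PW21a}, which I would cite rather than reprove—but rather the careful bookkeeping that converts ``$m_{\epsilon}$-positive'' into the integer threshold needed to apply that estimate, and checking the edge cases around $p = \lceil n/2\rceil$ versus $\lfloor n/2\rfloor$ when $n$ is odd versus even. I would therefore organize the proof as: (1) the reduction to Corollary \ref{C2} for $m_{\epsilon}$-positivity; (2) a lemma (cited) recording Petersen--Wink's implication ``$\mathfrak{R}$ is $(n-p)$-positive $\Rightarrow b_{p}(X) = 0$''; (3) the combinatorial step using the cone inclusions $\mathcal{P}_{m_{\epsilon}} \subset \mathcal{P}_{k}$ and Poincaré duality to cover the stated ranges of $p$. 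The one genuinely delicate point to verify is that $\lceil n/2\rceil$-positivity suffices for \emph{all} $p$: for $p > \lfloor n/2\rfloor$ one uses $b_{p}(X) = b_{n-p}(X)$ and $n - p < \lceil n/2\rceil$, so the form-degree $n-p$ is already covered, which closes the argument.
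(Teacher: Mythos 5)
Your proposal is correct and follows essentially the same route as the paper: Corollary \ref{C2} gives $m_{\epsilon}$-positivity of $\mathfrak{R}$, the cone monotonicity (Lemma \ref{L1}) upgrades this to $\lceil n/2\rceil$-positivity in case (i) and $k$-positivity in case (ii), and Theorem A of \cite{PW21a} (whose Bochner/Weitzenb\"ock content you correctly identify and rightly cite rather than reprove) yields the stated Betti number vanishing, with Poincar\'e duality handling the complementary degrees.
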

\begin{proof}
If $g\in\Gamma^{+}_{2}(\a_{\epsilon})$ for some $\epsilon\in(0,1)$, then by Corollary \ref{C2}, the curvature operator $\mathfrak{R}$ is  $m_{\epsilon}$-positive. Based on the Theorem A in \cite{PW21a}, we obtain the following topological restrictions:
\begin{itemize}
\item[Case (i).] If $k-1<m_{\epsilon}\leq k$  for some integer $1\leq k\leq \lceil\frac{n}{2}\rceil$, then the curvature operator is $\lceil\frac{n}{2}\rceil$-positive (cf. Lemma \ref{L1}). Therefore, we have $b_{p}(X)=0$ for all $1\leq p\leq n-1$.
\item[Case (ii).]  If $k-1<m_{\epsilon}\leq k$ for some integer $ \lceil\frac{n}{2}\rceil+1\leq k\leq n-1$, then the curvature operator is $k$-positive. Thus, the Betti numbers satisfy 
$$b_{1}(X)=\cdots=b_{n-k}(X)=0\quad and\quad b_{k}(X)=\cdots=b_{n-1}(X)=0.$$
\end{itemize}
Then the proof is completed.
\end{proof}
We have the following characterization of spherical space forms via the curvature operator.
\begin{theorem}\label{T3}
Let $(X,g)$ be a compact Riemannian manifold of dimension $n\geq3$. If the metric $g$ satisfies the curvature condition $g\in\Gamma^{+}_{2}(\a_{\epsilon})$ for some  $0<\epsilon\leq\sqrt{\frac{2}{(N_{1}-1)(N_{1}-2)}},$ 
then $X$ is diffeomorphic to a spherical space form.
\end{theorem}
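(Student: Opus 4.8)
The plan is to reduce the hypothesis to a known classification theorem for curvature operators of small $2$-positivity threshold. By Theorem \ref{T2} (or directly Corollary \ref{C2}), the condition $g\in\Gamma^{+}_{2}(\alpha_\epsilon)$ forces the curvature operator $\mathfrak{R}$ to be $m_\epsilon$-positive, where $m_\epsilon = \frac{N_1(N_1-1)\epsilon^2}{1+(N_1-1)\epsilon^2}$. The crucial observation is that the function $m_\epsilon$ is strictly increasing in $\epsilon$, so it suffices to show that the extremal choice $\epsilon = \sqrt{\frac{2}{(N_1-1)(N_1-2)}}$ yields $m_\epsilon \leq 2$. First I would substitute this value of $\epsilon^2 = \frac{2}{(N_1-1)(N_1-2)}$ into the formula for $m_\epsilon$: the numerator becomes $N_1(N_1-1)\cdot\frac{2}{(N_1-1)(N_1-2)} = \frac{2N_1}{N_1-2}$, and the denominator becomes $1 + (N_1-1)\cdot\frac{2}{(N_1-1)(N_1-2)} = 1 + \frac{2}{N_1-2} = \frac{N_1}{N_1-2}$. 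Hence $m_\epsilon = \frac{2N_1}{N_1-2}\cdot\frac{N_1-2}{N_1} = 2$. So for $0<\epsilon\leq\sqrt{\frac{2}{(N_1-1)(N_1-2)}}$ we get $m_\epsilon \leq 2$, and by the inclusion $\mathcal{P}_{m_\epsilon}\subset\mathcal{P}_2$ (part (iii) of the inclusion relations, i.e.\ Lemma \ref{L1}), the curvature operator $\mathfrak{R}$ is $2$-positive.

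Next I would invoke the classification theorem of Böhm–Wilking \cite{BW08} (building on Chen \cite{Che91} and Hamilton \cite{Ham82,Ham86}): a compact Riemannian manifold with $2$-positive curvature operator admits a metric of constant positive sectional curvature and is therefore diffeomorphic to a spherical space form. Combining this with the previous step gives the conclusion directly.

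I expect the only real content to be the arithmetic verification that the stated bound on $\epsilon$ is exactly the one making $m_\epsilon = 2$; everything else is a citation. One small subtlety worth addressing explicitly: Theorem \ref{T1}/Corollary \ref{C2} give $2$-positivity of $\mathfrak{R}$ in the strict (open-cone) sense from the strict hypothesis $g\in\Gamma^{+}_{2}(\alpha_\epsilon)$, which is precisely the hypothesis needed for the Böhm–Wilking result (as opposed to the closed-cone/rigidity version). Thus there is no boundary case to worry about, and the proof is complete once the value of $m_\epsilon$ at the endpoint is computed.
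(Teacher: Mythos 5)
Your proposal is correct and follows essentially the same route as the paper: compute that the stated bound on $\epsilon$ gives exactly $m_\epsilon=2$, conclude via Corollary \ref{C2} and the monotonicity of the cones $\mathcal{P}_m$ that $\mathfrak{R}$ is $2$-positive (i.e. $\lambda_1+\lambda_2>0$), and then cite Chen and B\"ohm--Wilking. The paper merely states the inequality $m_\epsilon\leq 2$ without the explicit arithmetic you supply; otherwise the arguments coincide.
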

\begin{proof}
For every $\epsilon$ satisfying $0<\epsilon\leq\sqrt{\frac{2}{(N_{1}-1)(N_{1}-2)}},$  we have
\[
m_{\epsilon}=\frac{N_{1}(N_1-1)\epsilon^2}{1+(N_1-1)\epsilon^2}\leq 2.
\]
Since $g\in\Gamma^{+}_{2}(\a_{\epsilon})$, Theorem \ref{T1} implies that the curvature operator $\mathfrak{R}$  is $m$-positive for all $0<m\leq2$. This particularly implies $\lambda_{1}+\lambda_{2}>0$. Therefore, by the classification theorems of Chen \cite{Che91} and B\"{o}hmann and Wilking \cite{BW08}, the conclusion follows.
\end{proof}

\subsection{Curvature operator of the second kind}
Let $(X,g)$ be a compact Riemannian manifold of dimension $n\geq 3$. Let $\Lambda_{2}=(\nu_{1},\cdots,\nu_{N_2})$ denote the ordered eigenvalues of the curvature operator of the second kind $\mathcal{R}$, and let $\{S_{i}\}_{i=1}^{N_2}$ be its corresponding orthonormal eigenvectors. For an arbitrary smooth $p$-form $T$ and any $S\in S^{2}_{0}(TX)$, the action of $S$ on $T$ is defined by
$$(ST)(X_{1},\cdots,X_{p})=\sum_{i=1}^{p}T(X_{1},\cdots,SX_{i},\cdots,X_{p}).$$ 
Then the decomposition of the Weitzenb\"{o}ck curvature operator $\mathfrak{Ric}$ acting on $T$ is given by 
\begin{equation}\label{decom}
\frac{3}{2}g(\mathfrak{Ric}(T),T)=\sum_{i=1}^{N_{2}}\nu_{i}|S_{i}T|^{2}+\frac{p(n-2p)}{n}\sum_{j=1}^{n}g(i_{\mathfrak{Ric}(e_{i})}T,i_{e_{j}}T)+\frac{p^{2}}{n^2}R|T|^{2}\,,
\end{equation}
as established in \cite{NPW23,NPWW23}. 
In order to  control the first term in \eqref{decom} by understanding the interaction of trace-free, symmetric tensors on forms, the authors developed a powerful calculus in \cite{NPW23} to estimate any finite weighted sums  with nonnegative weights.
\begin{definition}(\cite[Definition 3.1]{NPW23})
Let $\{\omega_{i}\}_{i=1}^{N_2}$ be the nonnegative weights of any finite weighted sums. Define
\[
\Om=\max_{1\leq i\leq N_2}\omega_{i} \quad \mathrm{and} \quad  S=\sum_{i=1}^{N_2}\omega_{i}\,.
\]
We call $S$ the total weight and $\Om$ the highest weight.
\end{definition} 
Let $\Lambda_{2}=(\nu_{1},\cdots,\nu_{N_2})$ denote the ordered eigenvalues of the operator $\mathcal{R}$.  In what follows, we use the notation $[\mathcal{R},\Om,S]$ to denote the supremum of all finite weighted sums $\sum_{i=1}^{N_2}\omega_{i}\nu_{i}$  in terms of $\Lambda_2$ with highest weight $\Om$ and total weight $S$. 
\begin{lemma}\label{Lem 3.4}(\cite[Lemma 3.4]{NPW23})
Let $\Lambda_{2}=(\nu_{1},\cdots,\nu_{N_2})$ denote the ordered eigenvalues of $\mathcal{R}$. Then, for any integer $1\leq m\leq N_{2}$, we have
$$[\mathcal{R},\Om,S]\geq(S-m\Om)\nu_{m+1}+\Om\sum_{i=1}^{m}\nu_{i}.$$\end{lemma}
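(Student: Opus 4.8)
The plan is to reduce Lemma~\ref{Lem 3.4} to an elementary extremal problem for finite weighted sums and then settle that problem by a single rearrangement identity. Unwinding the definition of $[\mathcal{R},\Om,S]$, it suffices to show that
\begin{equation}\label{weighted bound}
\sum_{i=1}^{N_{2}}\omega_{i}\nu_{i}\ \geq\ \Om\sum_{i=1}^{m}\nu_{i}+(S-m\Om)\nu_{m+1}
\end{equation}
for every \emph{admissible} weight vector $(\omega_{1},\dots,\omega_{N_{2}})$, i.e. one with $0\le\omega_{i}\le\Om$ for all $i$ and $\sum_{i=1}^{N_{2}}\omega_{i}=S$, and for every integer $1\le m\le N_{2}-1$ (the value $m=N_{2}$ being degenerate). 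Indeed, once \eqref{weighted bound} holds for all admissible weights, passing to the supremum over the admissible set — which is non-empty, since it contains the configuration assigning the weight $\Om$ to as many coordinates as possible and the leftover to one more — immediately gives $[\mathcal{R},\Om,S]\ge\Om\sum_{i=1}^{m}\nu_{i}+(S-m\Om)\nu_{m+1}$, which is the assertion of the lemma.

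To prove \eqref{weighted bound} I would split the sum at the index $m$. Since the eigenvalues are ordered, $\nu_{i}\ge\nu_{m+1}$ for every $i\ge m+1$, and because the $\omega_{i}$ are nonnegative this yields the tail estimate $\sum_{i=m+1}^{N_{2}}\omega_{i}\nu_{i}\ge\nu_{m+1}\sum_{i=m+1}^{N_{2}}\omega_{i}=\nu_{m+1}\bigl(S-\sum_{i=1}^{m}\omega_{i}\bigr)$, using the total-weight constraint. Feeding this into the left side of \eqref{weighted bound}, subtracting the right side, and using $m\Om=\sum_{i=1}^{m}\Om$ to cancel the $S$ and $\nu_{m+1}$ contributions, the difference simplifies to
\begin{equation}\label{collapse}
\sum_{i=1}^{N_{2}}\omega_{i}\nu_{i}-\Om\sum_{i=1}^{m}\nu_{i}-(S-m\Om)\nu_{m+1}\ \geq\ \sum_{i=1}^{m}(\omega_{i}-\Om)(\nu_{i}-\nu_{m+1}).
\end{equation}
For each $i\le m$ the factor $\omega_{i}-\Om$ is $\le 0$ (as $\Om$ is the highest weight) and $\nu_{i}-\nu_{m+1}$ is $\le 0$ (as the eigenvalues are ordered), so every summand on the right of \eqref{collapse} is a product of two non-positive numbers, hence nonnegative. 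Therefore the left side of \eqref{collapse} is nonnegative, which is precisely \eqref{weighted bound}.

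I do not anticipate a genuine obstacle: the whole argument rests on the algebraic simplification behind \eqref{collapse} together with the two sign observations. The only points needing a little care are bookkeeping — verifying that the admissible set is non-empty so that $[\mathcal{R},\Om,S]$ is meaningful, and handling the degenerate endpoint $m=N_{2}$ where $\nu_{m+1}$ falls out of range — and, if one wishes to record sharpness, noting that equality holds in \eqref{weighted bound} when the weight $\Om$ is assigned to $\nu_{1},\dots,\nu_{m}$ and the residual weight $S-m\Om$ to $\nu_{m+1}$, so the estimate is optimal for $m=\lfloor S/\Om\rfloor$.
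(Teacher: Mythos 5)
Your proof is correct: the tail bound $\sum_{i>m}\omega_i\nu_i\ge\nu_{m+1}\bigl(S-\sum_{i\le m}\omega_i\bigr)$ combined with the identity reducing the remainder to $\sum_{i\le m}(\omega_i-\Omega)(\nu_i-\nu_{m+1})\ge0$ establishes the inequality for \emph{every} admissible weight vector, which is the strongest form of the statement and covers the bracket $[\mathcal{R},\Omega,S]$ whether it is read as an infimum (as in \cite{NPW23}) or as the supremum written in this paper's Definition. The paper gives no proof of its own --- Lemma \ref{Lem 3.4} is quoted from \cite{NPW23} --- and your argument is essentially the same as the one there, so no further comparison is needed.
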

\begin{proposition}\label{Prop 3.15}(\cite[Proposition 3.15]{NPW23})
Let  $T$ be a $p$-form with $p\leq\frac{n}{2}$. Then we have
$$\frac{3}{2}g(\mathfrak{Ric}(T),T)\geq \Big[\mathcal{R},\frac{n^{2}p-np^{2}-2np+2n^{2}+2n-4p}{n(n+2)},\frac{3}{2}p(n-p) \Big]\cdot |T|^{2}.$$
\end{proposition}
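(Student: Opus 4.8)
The plan is to recast the right-hand side of the Weitzenböck decomposition \eqref{decom} as a single weighted sum of the ordered eigenvalues $\nu_{1}\leq\cdots\leq\nu_{N_{2}}$ of $\mathcal{R}$, with controlled highest and total weights, and then to read off the estimate from the definition of $[\mathcal{R},\Om,S]$. First, the hypothesis $p\leq\frac{n}{2}$ makes the coefficient $\frac{p(n-2p)}{n}$ in \eqref{decom} nonnegative, so the Ricci term enters with a nonnegative weight rather than being discarded. Next, since $\mathfrak{Ric}$ reduces to the Ricci tensor on $1$-tensors, one uses the algebraic identities of \cite{NPW23,NPWW23} expressing the Ricci endomorphism — hence $\sum_{j}g(i_{\mathfrak{Ric}(e_{j})}T,i_{e_{j}}T)$ — through the eigenvalues $\nu_{i}$ and eigentensors $S_{i}$, together with $R=\frac{2n}{n+2}\sum_{i}\nu_{i}$ from \eqref{R2}, to rewrite pointwise
$$\frac{3}{2}g(\mathfrak{Ric}(T),T)=\sum_{i=1}^{N_{2}}\omega_{i}\,\nu_{i},$$
where each weight $\omega_{i}=\omega_{i}(T)\geq 0$ is a combination of $|S_{i}T|^{2}$, a bilinear contraction of $S_{i}T$ against $T$, and a multiple of $|T|^{2}$.

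The two quantitative inputs are then the total weight and the highest weight. For the total weight, a universal identity expresses $\sum_{i=1}^{N_{2}}|S_{i}T|^{2}$ as an explicit multiple of $|T|^{2}$ (it follows from the projection formula $\sum_{i}(S_{i})_{ab}(S_{i})_{cd}=\frac{1}{2}(\delta_{ac}\delta_{bd}+\delta_{ad}\delta_{bc})-\frac{1}{n}\delta_{ab}\delta_{cd}$ for the orthogonal projection onto $S^{2}_{0}$), and adding the analogous contractions coming from the Ricci and scalar terms one obtains $\sum_{i=1}^{N_{2}}\omega_{i}=\frac{3}{2}p(n-p)|T|^{2}$, i.e. the prescribed total weight $S=\frac{3}{2}p(n-p)$. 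For the highest weight, one needs the sharp pointwise bound $|ST|^{2}\leq c(n,p)\,|T|^{2}$ over unit trace-free symmetric $2$-tensors $S$ acting on a $p$-form $T$, obtained by diagonalizing $S$ and optimizing over how $T$ is distributed among the coordinate $p$-planes; feeding in the pointwise contributions of the Ricci and scalar terms then yields $\max_{i}\omega_{i}\leq\frac{n^{2}p-np^{2}-2np+2n^{2}+2n-4p}{n(n+2)}\,|T|^{2}$, the prescribed highest weight $\Om$. The asserted inequality $\frac{3}{2}g(\mathfrak{Ric}(T),T)\geq[\mathcal{R},\Om,S]\cdot|T|^{2}$ is then immediate from the definition of $[\mathcal{R},\Om,S]$ together with its monotonicity in the highest- and total-weight arguments (\cite{NPW23}).

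The main difficulty is the sharp highest-weight estimate, which is where the particular rational expression $\frac{n^{2}p-np^{2}-2np+2n^{2}+2n-4p}{n(n+2)}$ is produced. One must simultaneously obtain the optimal pointwise bound on $|ST|^{2}/|T|^{2}$ for a unit $S\in S^{2}_{0}(T_{x}X)$ — equality here forcing a very special configuration of $S$ and a $T$ concentrated on a single coordinate $p$-plane — and then verify that the nonnegative Ricci contribution does not push the maximal weight past this value, i.e. that the extremal configurations for the two terms of \eqref{decom} are compatible. A secondary, purely bookkeeping point is to confirm that every $\omega_{i}$ is indeed nonnegative and that the several contributions to the total weight add up exactly to $\frac{3}{2}p(n-p)|T|^{2}$; both rely on the precise form of the identities imported from \cite{NPW23,NPWW23}.
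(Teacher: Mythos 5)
Note first that the paper does not prove this proposition at all: it is imported verbatim from \cite{NPW23}, so the only benchmark is the original source. Your outline does reproduce the architecture of that proof --- rewrite the right-hand side of \eqref{decom} as a single weighted sum $\sum_i\omega_i\nu_i$ with nonnegative weights, compute the total weight, bound the highest weight, and invoke the extremal characterization of $[\mathcal{R},\Omega,S]$. One caution on the last step: for your conclusion to follow, $[\mathcal{R},\Omega,S]$ must be the \emph{infimum} over admissible weight configurations (as in \cite{NPW23}); with the ``supremum'' stated in this paper's definition the inequality would point the wrong way, so you are implicitly (and correctly) using the other convention.

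As written, however, this is a strategy rather than a proof: the entire content of the proposition is the specific value $\Omega=\frac{n^{2}p-np^{2}-2np+2n^{2}+2n-4p}{n(n+2)}$, and you defer precisely the computations that produce it. Concretely: (i) the total-weight identity $\sum_i\omega_i=\frac{3}{2}p(n-p)|T|^2$ rests on trace identities for $\sum_i|S_iT|^2$ and for the Ricci contraction which you invoke but never state or verify; (ii) the nonnegativity of each $\omega_i$ is not mere bookkeeping, since the expansion of $\mathrm{Ric}$ in the eigenbasis of $\mathcal{R}$ feeds into $\omega_i$ through terms whose signs must actually be checked; and (iii) the highest-weight bound is a \emph{joint} optimization: the sharp bound $|S_iT|^2\le\frac{p(n-p)}{n}|T|^2$ for unit trace-free $S_i$ is only one of three contributions to $\omega_i$, and one must either maximize the combined weight directly or prove that the extremizers of the separate terms are compatible --- the step you yourself label ``the main difficulty'' and do not carry out. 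Until (i)--(iii) are executed, the inequality is not established; a complete write-up would either supply these lemmas or, as the paper does, simply cite \cite{NPW23}.
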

For notational convenience, for every integer $p\leq \frac{n}{2}$, we introduce a positive coefficient function $C_{p}$ as defined in \cite{PW21b}:
\begin{equation}\label{Cp}
\begin{split}
C_{p}&=\frac{3}{2}\frac{n(n+2)p(n-p)}{n^{2}p-np^{2}-2np+2n^{2}+2n-4p}=\frac{3}{2}\frac{n(n+2)}{n+\frac{2n}{p}+\frac{2n-4p}{p(n-p)}}.\\
\end{split}
\end{equation}

\begin{corollary}\label{C1}
Let $\Lambda_{2}=(\nu_{1},\cdots,\nu_{N_{2}})$  be the ordered eigenvalues of the operator  $\mathcal{R}$. Assume that 
\begin{equation}\label{C1 assume 1}
\nu_1 + \cdots + \nu_{\lfloor C_{p} \rfloor} + (C_{p} - \lfloor C_{p} \rfloor)\nu_{\lfloor C_{p} \rfloor+1} \geq C_{p}\,\kappa
\end{equation}
for some integer $p\leq \frac{n}{2}$,  then for any smooth $k$-form $T$ on $X$ with $p\leq k\leq \frac{n}{2}$, we have
\begin{equation*}
g(\mathfrak{Ric}(T), T) \geq \kappa k(n-k)|T|^2.
\end{equation*}
In particular,  assume that
\begin{equation}\label{C1 assume 2}
\nu_{1}+\cdots+\nu_{\lfloor\frac{3n}{4}\rfloor }+\Big(\frac{3n}{4}-\lfloor\frac{3n}{4}\rfloor\Big)\nu_{\lfloor\frac{3n}{4}\rfloor +1}\geq \frac{3n}{4}\kappa\,,
\end{equation}
then for any smooth $p$-form $T$ with $p\leq \frac{n}{2}$, we have  
\begin{equation*}
g(\mathfrak{Ric}(T),T)\geq\kappa p(n-p)|\a|^{2}.
\end{equation*}
\end{corollary}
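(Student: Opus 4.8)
The idea is to feed the Weitzenb\"{o}ck lower bound of Proposition~\ref{Prop 3.15} into the weighted-sum calculus of Lemma~\ref{Lem 3.4}, so that the whole statement reduces to an elementary inequality for the ordered eigenvalues $\nu_{1}\leq\cdots\leq\nu_{N_{2}}$ of $\mathcal{R}$. Fix a smooth $k$-form $T$ with $p\leq k\leq\frac{n}{2}$ and set
\begin{equation*}
\Omega_{k}=\frac{n^{2}k-nk^{2}-2nk+2n^{2}+2n-4k}{n(n+2)},\qquad S_{k}=\frac{3}{2}k(n-k),
\end{equation*}
so that $C_{k}=S_{k}/\Omega_{k}$ by \eqref{Cp}, and $\Omega_{k}>0$ for $1\leq k\leq\frac{n}{2}$ (a short check). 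Proposition~\ref{Prop 3.15} gives $\frac{3}{2}g(\mathfrak{Ric}(T),T)\geq[\mathcal{R},\Omega_{k},S_{k}]\,|T|^{2}$, so it suffices to prove the purely algebraic inequality $[\mathcal{R},\Omega_{k},S_{k}]\geq S_{k}\kappa=\frac{3}{2}k(n-k)\kappa$; the asserted estimate is then immediate.

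To get this, I would apply Lemma~\ref{Lem 3.4} with the integer $m=\lfloor C_{p}\rfloor$. A direct computation with \eqref{Cp} shows that $k\mapsto C_{k}$ is nondecreasing on $(0,\frac{n}{2}]$; since $p\leq k\leq\frac{n}{2}$ this gives $C_{p}\leq C_{k}$, whence the weight deficit $S_{k}-\lfloor C_{p}\rfloor\Omega_{k}=\Omega_{k}\bigl(C_{k}-\lfloor C_{p}\rfloor\bigr)\geq0$, and Lemma~\ref{Lem 3.4} rearranges into
\begin{equation*}
[\mathcal{R},\Omega_{k},S_{k}]\ \geq\ \Omega_{k}\Bigl[\Bigl(\sum_{i=1}^{\lfloor C_{p}\rfloor}\nu_{i}+(C_{p}-\lfloor C_{p}\rfloor)\nu_{\lfloor C_{p}\rfloor+1}\Bigr)+(C_{k}-C_{p})\,\nu_{\lfloor C_{p}\rfloor+1}\Bigr].
\end{equation*}
The first group in brackets is $\geq C_{p}\kappa$ by hypothesis \eqref{C1 assume 1}. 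Moreover, since $\nu_{1}\leq\cdots\leq\nu_{\lfloor C_{p}\rfloor+1}$, the left side of \eqref{C1 assume 1} is a weighted average (of total weight $C_{p}$) of numbers all bounded above by $\nu_{\lfloor C_{p}\rfloor+1}$, so $\nu_{\lfloor C_{p}\rfloor+1}\geq\kappa$. Combining these with $C_{k}-C_{p}\geq0$ yields $[\mathcal{R},\Omega_{k},S_{k}]\geq\Omega_{k}\bigl(C_{p}\kappa+(C_{k}-C_{p})\kappa\bigr)=\Omega_{k}C_{k}\kappa=S_{k}\kappa$, which is exactly the algebraic inequality we reduced to; taking $k=p$ recovers the level-$p$ estimate directly from \eqref{C1 assume 1}.

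The \emph{in particular} assertion is proved by running these same three moves for an arbitrary $p$-form with $p\leq\frac{n}{2}$, but now invoking Lemma~\ref{Lem 3.4} with the single universal choice $m=\lfloor\tfrac{3n}{4}\rfloor$ and using \eqref{C1 assume 2} in place of \eqref{C1 assume 1}: the monotonicity of the truncated averages (Lemma~\ref{L1}) is what lets the one threshold $\tfrac{3n}{4}$ govern every degree $p$ simultaneously, and the bound $\nu_{\lfloor 3n/4\rfloor+1}\geq\kappa$ is extracted from \eqref{C1 assume 2} exactly as above.

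\textbf{Where the difficulty lies.} The analytic content — the Weitzenb\"{o}ck decomposition \eqref{decom} and its consequences Lemma~\ref{Lem 3.4} and Proposition~\ref{Prop 3.15} — is quoted, so what remains is combinatorial bookkeeping: choosing the integer $m$ in Lemma~\ref{Lem 3.4}, keeping the sign of the weight deficit $S_{k}-m\Omega_{k}$ correct, and tracking the floor functions against the leftover fractional weight. All of this rests on the monotonicity of $C_{k}$ in $k$ and of the truncated averages (Lemma~\ref{L1}), together with the elementary facts $\Omega_{k}>0$ and $\lfloor C_{p}\rfloor+1\leq N_{2}$ valid for $n\geq3$; the one genuinely delicate point is the matching of the floor $\lfloor C_{p}\rfloor$ (resp. $\lfloor\tfrac{3n}{4}\rfloor$) with the remaining fractional remainder, which is precisely where Lemma~\ref{L1} does the real work.
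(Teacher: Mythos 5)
Your argument for the first assertion is correct and is essentially the paper's, organized slightly differently: you apply Lemma \ref{Lem 3.4} at the integer $m=\lfloor C_p\rfloor$ and extract $\nu_{\lfloor C_p\rfloor+1}\geq\kappa$ directly from \eqref{C1 assume 1}, whereas the paper normalizes the highest weight to $1$, applies Lemma \ref{Lem 3.4} at $m=\lfloor C_k\rfloor$, and then uses Lemma \ref{L1} to pass from level $C_k$ down to level $C_p$. Both routes rest on $C_p\leq C_k$ and close identically, so this half is fine.

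The gap is in the \emph{in particular} assertion, at $p=1$. Your scheme requires the target level $C_p$ to lie \emph{above} the hypothesis level $\tfrac{3n}{4}$: only then is the weight deficit $S_p-\lfloor\tfrac{3n}{4}\rfloor\Omega_p=\Omega_p\bigl(C_p-\lfloor\tfrac{3n}{4}\rfloor\bigr)$ nonnegative and the leftover term $(C_p-\tfrac{3n}{4})\,\nu_{\lfloor 3n/4\rfloor+1}$ controllable by $(C_p-\tfrac{3n}{4})\kappa$; equivalently, Lemma \ref{L1} transfers \eqref{C1 assume 2} from level $\tfrac{3n}{4}$ to level $C_p$ only when $C_p\geq\tfrac{3n}{4}$. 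But $C_1=\frac{3n}{2}\frac{n^2+n-2}{3n^2-n-4}\leq\frac{3n}{4}$ for all $n\geq 3$, strictly for $n\geq 4$ (e.g.\ $n=4$ gives $C_1=2.7<3=\lfloor\tfrac{3n}{4}\rfloor$), so for $1$-forms the deficit is negative and the monotonicity runs the wrong way; taking $\nu_1=\nu_2=\nu_3=-M$ with $\nu_{\lfloor 3n/4\rfloor+1}$ adjusted so that \eqref{C1 assume 2} holds with equality shows the purely algebraic implication you need fails as $M\to\infty$. The paper closes exactly this case by replacing Proposition \ref{Prop 3.15}, for $p=1$, with the sharper Weitzenb\"ock estimate of \cite[Remark 3.16]{NPW23}, whose effective constant $C'_1=\frac{3(n-1)(n+2)}{2(2n-1)}\geq\frac{3n}{4}$ restores the correct direction of Lemma \ref{L1}. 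Without that additional input your proof of the second assertion does not cover $p=1$.
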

\begin{proof}
For every $\Omega$ and $S$,  \cite[Lemma 3.3]{NPW23} and Lemma \ref{Lem 3.4} yield the following estimate 
\begin{equation}\label{tri}
\begin{split}
[\mathcal{R},\Om,S]&=\Om\cdot \Big[\mathcal{R},1,\frac{S}{\Om}\Big]\\
&\geq \Om\cdot\Big(\big(\frac{S}{\Om}-\lfloor \frac{S}{\Om}\rfloor\big)\nu_{\lfloor \frac{S}{\Om}\rfloor+1}+\sum_{i=1}^{\lfloor \frac{S}{\Om}\rfloor}\nu_{i}\Big)\\
&=S\frac{\sum_{i=1}^{\lfloor \frac{S}{\Om}\rfloor}\nu_{i}+\big(\frac{S}{\Om}-\lfloor \frac{S}{\Om}\rfloor\big)\nu_{\lfloor \frac{S}{\Om}\rfloor+1}}{\frac{S}{\Om}}.\\
\end{split}
\end{equation}
Set 
\[
\Omega=\frac{n^2p-np^2-2np+2n^2+2n-4p}{n(n+2)} \quad \textrm{and} \quad S=\frac{3}{2}p(n-p)  
\]
so that $ \frac{S}{\Om}=C_{p}$. By the definition of $C_p$ in \eqref{Cp}, we see that $C_{k}\geq C_{p}$ for all $p\leq k\leq \frac{n}{2}$.  Then by Lemma \ref{L1} and \eqref{C1 assume 1}, we obtain 
\begin{equation*}
\left[\mathcal{R}, \frac{n^2k-nk^2-2nk+2n^2+2n-4k}{n(n+2)}, \frac{3}{2}k(n-k)\right] \geq \frac{3}{2}k(n-k)\kappa.
\end{equation*}
This proves the first statement by Proposition \ref{Prop 3.15}.

Now we proceed to prove the second statement. It is straightforward to verify that for all $n\geq 3$
\[
C_1=\frac{3n}{2}\frac{n^2+n-2}{3n^2-n-4}\leq \frac{3n}{4}\,, \quad C_2=\frac{3n(n+2)(n-2)}{4n^2-6n-8}>\frac{3n}{4} 
\]
and $C_{p}$ is monotonically increasing in the parameter $p$. We divide the proof into two cases based on $p$:
\begin{itemize}
\item[(i)] If $p=1$. According to \cite[Remark 3.16]{NPW23}, we have the improved estimate
\begin{equation*}
\frac{3}{2}g(\mathfrak{Ric}(T),T) \geq \left[\mathcal{R}, \frac{2n-1}{n+2}, \frac{3(n-1)}{2}\right] \cdot |T|^2.
\end{equation*}
Set $C'_{1}=\frac{3(n-1)}{2} \cdot \frac{n+2}{2n-1}\geq \frac{3n}{4}$. Then by \eqref{tri}, we obtain
\[
\left[\mathcal{R}, \frac{2n-1}{n+2}, \frac{3(n-1)}{2}\right]\geq \frac{3}{2}p(n-p)\frac{\sum_{i=1}^{\lfloor C'_{1}\rfloor}\nu_{i}+\big(C'_{1}-\lfloor C'_{1}\rfloor\big)\nu_{\lfloor C'_{1}\rfloor+1}}{C'_{1}}\,. 
\]
Using Lemma \ref{L1}, we have
\[
\left[\mathcal{R}, \frac{2n-1}{n+2}, \frac{3(n-1)}{2}\right]\geq \frac{3}{2}p(n-p)\frac{\sum_{i=1}^{\lfloor \frac{3n}{4}\rfloor}\nu_{i}+\big(\frac{3n}{4}-\lfloor \frac{3n}{4}\rfloor\big)\nu_{\lfloor \frac{3n}{4}\rfloor+1}}{\frac{3n}{4}}\geq \frac{3}{2}p(n-p)\kappa\,. 
\]
This proves the case $p=1$.
\item[(ii)] If $p\geq 2$. In this setting $C_p>\frac{3n}{4}$. Then by \eqref{tri} and Lemma \ref{L1} again, we obtain
\[
\begin{split}
& \Big[\mathcal{R},\frac{n^{2}p-np^{2}-2np+2n^{2}+2n-4p}{n(n+2)},\frac{3}{2}p(n-p) \Big]\\
& \geq  \frac{3}{2}p(n-p)\frac{\sum_{i=1}^{\lfloor C_{p}\rfloor}\nu_{i}+\big(C_{p}-\lfloor C_{p}\rfloor\big)\nu_{\lfloor C_{p}\rfloor+1}}{C_{p}}\\
& \geq  \frac{3}{2}p(n-p)\frac{\sum_{i=1}^{\lfloor \frac{3n}{4}\rfloor}\nu_{i}+\big(\frac{3n}{4}-\lfloor \frac{3n}{4}\rfloor\big)\nu_{\lfloor \frac{3n}{4}\rfloor+1}}{\frac{3n}{4}}\\
& \geq \frac{3}{2}p(n-p)\kappa\,. 
\end{split}  
\]
This proves the case $p\geq  2$.
\end{itemize}
Consequently, we complete the proof of the corollary.
\end{proof}

We now introduce a novel shifted cone construction for Riemannian manifolds based on the curvature operator of the second kind $\mathcal{R}$. Let $(X,g)$ be a compact Riemannian manifold of dimension $n \geq 3$, and consider the ordered eigenvalues $\Lambda_{2}=(\nu_{1},\cdots,\nu_{N_{2}})$ of $\mathcal{R}$. The scalar curvature $R$ of the Riemannian metric $g$ satisfies \eqref{R2}. For any non-negative parameter $\beta\geq 0$, we define the $\beta$-shifted curvature operator as follows:
\begin{equation*}
\Lambda_{\beta}:=\Lambda_{2}-\b\Big(\frac{(n+2)R}{2n},\frac{(n+2)R}{2n},\cdots,\frac{(n+2)R}{2n}\Big).
\end{equation*}
\begin{theorem}\label{T4}
Let $(X,g)$ be a compact Riemannian manifold of dimension $n \geq 3$. If $X \in \Gamma^{+}_2(\beta_\epsilon)$ for some $\epsilon \in (0,1)$, then the curvature operator of the second kind $\mathcal{R}$ is $m_{\epsilon}$-positive, i.e., 
\begin{equation*}
\nu_1 + \cdots + \nu_{\lfloor m_{\epsilon} \rfloor} + (m_{\epsilon} - \lfloor m_{\epsilon} \rfloor)\nu_{\lfloor m_{\epsilon} \rfloor+1} > 0.
\end{equation*}
Furthermore, the Betti numbers of $X$ satisfy the following statements.
\begin{itemize}
\item[(i)]  If $m_{\epsilon} \leq \frac{3n}{4}$, then all Betti numbers vanish. Namely,
\begin{equation*}
b_p(X) = 0, \quad {\rm{for\ all}}\ 1 \leq p \leq n-1.
\end{equation*}
\item[(ii)]  For any integer $1 \leq p \leq \lfloor \frac{n}{2} \rfloor$, if $m_{\epsilon} \leq C_p(n)$, then
\begin{equation*}
b_p(X) = \cdots = b_{n-p}(X)=0.
\end{equation*}
\end{itemize}
\end{theorem}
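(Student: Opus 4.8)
The plan is to establish the statement in two stages. First, we upgrade the shifted cone hypothesis $g\in\Gamma^{+}_{2}(\beta_{\epsilon})$ to $m_{\epsilon}$-positivity of the curvature operator of the second kind $\mathcal{R}$ by invoking the cone inclusion of Corollary \ref{C2}. Second, we convert $m_{\epsilon}$-positivity into vanishing of Betti numbers by feeding it through the Weitzenböck estimates packaged in Corollary \ref{C1}, together with the monotonicity Lemma \ref{L1} and the classical Bochner technique.

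For the first stage, observe that by \eqref{R2} the $\beta_{\epsilon}$-shift $\Lambda_{\beta_{\epsilon}}$ of $\Lambda_{2}=(\nu_{1},\dots,\nu_{N_{2}})$ coincides with the abstract shift $(\Lambda_{2})_{\beta_{\epsilon}}=\Lambda_{2}-\beta_{\epsilon}\bar{\nu}\,I_{N_{2}}$ with $\bar{\nu}=\sum_{i}\nu_{i}$, so that $g\in\Gamma^{+}_{2}(\beta_{\epsilon})$ is, pointwise on $X$, precisely the condition $\Lambda_{2}\in\Gamma^{+}_{2}(\beta_{\epsilon})$ with $N=N_{2}$. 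Applying Corollary \ref{C2} with $N=N_{2}$ gives $\Lambda_{2}\in\mathcal{P}_{m_{\epsilon}}$ at every point, and in terms of the ordered eigenvalues this is, by Definition \ref{Pm}, exactly the asserted inequality $\nu_{1}+\dots+\nu_{\lfloor m_{\epsilon}\rfloor}+(m_{\epsilon}-\lfloor m_{\epsilon}\rfloor)\nu_{\lfloor m_{\epsilon}\rfloor+1}>0$. We also record that the nested inclusion $\Gamma^{+}_{2}(\beta_{\epsilon})\subset\Gamma^{+}_{1}(\beta_{\epsilon})$ gives $\sigma_{1}((\Lambda_{2})_{\beta_{\epsilon}})=\epsilon\,\bar{\nu}>0$, hence $R>0$ by \eqref{R2}.

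For the second stage, recall that by Hodge theory $b_{p}(X)$ is the dimension of the space of harmonic $p$-forms and, by Poincaré duality, $b_{p}(X)=b_{n-p}(X)$, so it suffices to establish vanishing for $1\leq p\leq\lfloor\frac{n}{2}\rfloor$; moreover a harmonic $p$-form $\omega$ with $g(\mathfrak{Ric}(\omega),\omega)\geq c\,|\omega|^{2}$ for some $c>0$ must vanish, upon integrating the Bochner formula over the compact $X$. So it is enough to produce pointwise positivity of $\mathfrak{Ric}$ on the relevant forms. For part (ii), fixing $1\leq p\leq\lfloor\frac{n}{2}\rfloor$ with $m_{\epsilon}\leq C_{p}(n)$, we chain Lemma \ref{L1} (with $k_{1}=m_{\epsilon}$, $k_{2}=C_{p}$) against the pointwise $m_{\epsilon}$-positivity established above to deduce that $\nu_{1}+\dots+\nu_{\lfloor C_{p}\rfloor}+(C_{p}-\lfloor C_{p}\rfloor)\nu_{\lfloor C_{p}\rfloor+1}$ is pointwise positive; since $X$ is compact and the eigenvalues vary continuously, there is a uniform $\kappa>0$ for which \eqref{C1 assume 1} holds. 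Corollary \ref{C1} then yields $g(\mathfrak{Ric}(T),T)\geq\kappa\,k(n-k)|T|^{2}$ for every $k$-form $T$ with $p\leq k\leq\frac{n}{2}$, so every harmonic $k$-form vanishes for $p\leq k\leq\lfloor\frac{n}{2}\rfloor$, and Poincaré duality extends this to $b_{k}(X)=0$ for all $p\leq k\leq n-p$. Part (i) runs identically with $C_{p}$ replaced by $\frac{3n}{4}$: the hypothesis $m_{\epsilon}\leq\frac{3n}{4}$ together with Lemma \ref{L1} gives pointwise (hence, by compactness, uniform) validity of \eqref{C1 assume 2} for some $\kappa>0$, and the second assertion of Corollary \ref{C1} supplies $g(\mathfrak{Ric}(T),T)\geq\kappa\,p(n-p)|T|^{2}$ for all $p$-forms with $p\leq\frac{n}{2}$, killing $b_{1},\dots,b_{\lfloor n/2\rfloor}$ and, by duality, all $b_{p}$ with $1\leq p\leq n-1$.

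The hard part is not a single deep step but the bookkeeping: one must verify that the geometric normalization \eqref{R2} makes $\Lambda_{\beta_{\epsilon}}$ coincide with the abstract $\beta_{\epsilon}$-shift so that Corollary \ref{C2} applies verbatim with $N=N_{2}$, and then track the index monotonicity of Lemma \ref{L1} carefully to pass from $m_{\epsilon}$-positivity to the precise weighted inequalities \eqref{C1 assume 1} and \eqref{C1 assume 2} demanded by Corollary \ref{C1}; the passage from pointwise strict positivity to a uniform positive lower bound $\kappa$ uses compactness of $X$ and should be stated explicitly. All of the genuine analytic content—the Weitzenböck decomposition \eqref{decom} and the weighted-sum estimates underlying Corollary \ref{C1}—is imported from \cite{NPW23, NPWW23}.
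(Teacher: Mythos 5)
Your proposal is correct and follows essentially the same route as the paper: cone inclusion (Theorem \ref{T1}/Corollary \ref{C2}) to get $m_{\epsilon}$-positivity of $\mathcal{R}$, then Lemma \ref{L1} and Corollary \ref{C1} combined with the Bochner technique to kill the Betti numbers. You merely spell out details the paper leaves implicit (the identification of the geometric shift with the abstract one via \eqref{R2}, the compactness argument producing a uniform $\kappa>0$, and the final Bochner/Poincaré-duality step), which is a faithful expansion rather than a different argument.
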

\begin{proof}
If $g\in\Gamma^{+}_{2}(\beta_{\epsilon})$ for some $\epsilon\in(0,1)$,  Theorem \ref{T1} implies that the
curvature operator of the second kind $\mathcal{R}$ is $m_{\epsilon}$-positive, i.e., there exists a positive constant $\kappa$ such that
$$\nu_{1}+\cdots+\nu_{\lfloor m_{\epsilon}\rfloor}+(m_{\epsilon}-\lfloor m_{\epsilon}\rfloor)\nu_{\lfloor m_{\epsilon}\rfloor+1}\geq m_{\epsilon}\kappa.$$
Corollary \ref{C1} yields the following topological constraints:
\begin{itemize}
\item[(i)] If $m_{\epsilon}\leq\frac{3n}{4}$, then the curvature operator of the second is $\frac{3n}{4}$-positive, and consequently $b_{p}(X)=0$ for all $1\leq p\leq n-1$.
\item[(ii)] If $m_{\epsilon} \leq C_p$, then $\mathcal{R}$ is $C_p$-positive, which implies that  $$b_{p}(X)=\cdots=b_{\lfloor\frac{n}{2}\rfloor}(X)=b_{\lceil\frac{n}{2}\rceil}(X)=\cdots=b_{n-p}(X)=0.$$
\end{itemize}
This ends the proof of the theorem.
\end{proof}
Finally, we have the following characterization of spherical space forms via the curvature operator of the second kind $\mathcal{R}$.
\begin{theorem}\label{T5}
Let $(X,g)$ be a compact Riemannian manifold of dimension $n\geq3$. If $g\in\Gamma^{+}_{2}(\beta_{\epsilon})$ for some $0<\epsilon\leq\sqrt{\frac{3}{(N_{2}-1)(N_{2}-3)}},$
then $X$ is diffeomorphic to a spherical space form.
\end{theorem}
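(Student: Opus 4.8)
The plan is to reduce the statement, via Theorem~\ref{T4} together with the monotonicity of the $m$-positivity cones (Lemma~\ref{L1}), to the \emph{$3$-positivity} of the curvature operator of the second kind $\mathcal{R}$, and then to invoke the classification theorem of Li~\cite{Li24}, which asserts that a compact Riemannian manifold of dimension $n\geq 3$ whose curvature operator of the second kind is $3$-positive is diffeomorphic to a spherical space form.

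First I would record the arithmetic that identifies the threshold on $\epsilon$. Writing $N=N_{2}=\frac{(n-1)(n+2)}{2}$ and recalling $m_{\epsilon}=\frac{N(N-1)\epsilon^{2}}{1+(N-1)\epsilon^{2}}$, a direct manipulation shows
\begin{equation*}
m_{\epsilon}\leq 3 \iff N(N-1)\epsilon^{2}\leq 3+3(N-1)\epsilon^{2} \iff (N_{2}-1)(N_{2}-3)\epsilon^{2}\leq 3 \iff \epsilon\leq\sqrt{\tfrac{3}{(N_{2}-1)(N_{2}-3)}}\,,
\end{equation*}
where we use that $N_{2}\geq 5>3$ for all $n\geq 3$, so that $N_{2}-3>0$ and no sign issues arise. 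Thus the hypothesis $0<\epsilon\leq\sqrt{3/((N_{2}-1)(N_{2}-3))}$ is exactly the condition $m_{\epsilon}\leq 3$.

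Next, since $g\in\Gamma^{+}_{2}(\beta_{\epsilon})$, Theorem~\ref{T4} (equivalently Theorem~\ref{T1} applied with $N=N_{2}$ to the ordered eigenvalue vector $\Lambda_{2}$ of $\mathcal{R}$) yields that $\mathcal{R}$ is $m_{\epsilon}$-positive, and because $g\in\Gamma^{+}_{2}$ is an open condition this positivity is strict: there is $c_{0}>0$ with $\nu_{1}+\cdots+\nu_{\lfloor m_{\epsilon}\rfloor}+(m_{\epsilon}-\lfloor m_{\epsilon}\rfloor)\nu_{\lfloor m_{\epsilon}\rfloor+1}=c_{0}$. If $m_{\epsilon}<3$, applying Lemma~\ref{L1} with $k_{1}=m_{\epsilon}\leq k_{2}=3\leq N_{2}$ gives $\frac{1}{m_{\epsilon}}c_{0}\leq\frac{1}{3}(\nu_{1}+\nu_{2}+\nu_{3})$, hence
\begin{equation*}
\nu_{1}+\nu_{2}+\nu_{3}\geq\frac{3}{m_{\epsilon}}\,c_{0}>0\,;
\end{equation*}
if $m_{\epsilon}=3$ the inequality $\nu_{1}+\nu_{2}+\nu_{3}>0$ is immediate. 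In either case $\mathcal{R}$ is $3$-positive at every point of $X$.

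Finally, applying Li's theorem~\cite{Li24} to $(X,g)$ shows that $X$ is diffeomorphic to a spherical space form, completing the proof. I do not anticipate a genuine obstacle: the substantive input is Li's classification result, which we quote as a black box, while the remaining work is the elementary threshold computation and the cone monotonicity of Lemma~\ref{L1}. The only point requiring a little care is keeping the positivity \emph{strict} when passing from $m_{\epsilon}$-positivity to $3$-positivity, which is handled by dividing through by $c_{0}/m_{\epsilon}>0$ as above.
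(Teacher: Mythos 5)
Your proposal is correct and follows essentially the same route as the paper: verify that the hypothesis on $\epsilon$ is exactly $m_{\epsilon}\leq 3$, deduce $m_{\epsilon}$-positivity of $\mathcal{R}$ from Theorem~\ref{T1}/Corollary~\ref{C2}, upgrade to $3$-positivity ($\nu_{1}+\nu_{2}+\nu_{3}>0$) via the monotonicity of Lemma~\ref{L1}, and conclude with Li's classification theorem. Your explicit tracking of strict positivity through the scaling factor $3/m_{\epsilon}$ is a slightly more careful rendering of a step the paper leaves implicit, but the argument is the same.
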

\begin{proof}
For every $\epsilon$ satisfying $0<\epsilon\leq\sqrt{\frac{3}{(N_{2}-1)(N_{2}-3)}},$  we have
\[
m_{\epsilon}=\frac{N_{2}(N_2-1)\epsilon^2}{1+(N_2-1)\epsilon^2}\leq 3.
\]
Since  $g\in\Gamma^{+}_{2}(\beta_{\epsilon})$, Theorem \ref{T1} implies that the curvature operator of the second kind $\mathcal{R}$ is $m$-positive for all $0<m\leq 3$, and in particular,
\[
\nu_1+\nu_2+\nu_3>0.
\]
The conclusion then follows from a result of Li \cite[Theorem 1.1]{Li24}, which establishes that Riemannian manifolds with $3$-positive curvature operator of the second kind must be spherical space forms.
\end{proof}

\subsection{Induced K\"{a}hler curvature operator}
Recall that for a K\"{a}hler manifold $(X,g)$ of complex dimension $n$, the curvature operator $\mathfrak{R}$ vanishes on the orthogonal complement of the holonomy algebra $\mathfrak{u}(n)\subset\mathfrak{so}(2n)$. This motivates us to study the induced K\"{a}hler curvature operator $\mathscr{R}:\mathfrak{u}(n)\rightarrow\mathfrak{u}(n)$ with ordered eigenvalues $\Lambda_{3}=(\rho_{1},\cdots,\rho_{N_{3}})$, where \begin{equation}
N_{3}=n^2=\dim \mathfrak{u}(n)\,.
\end{equation} 

For every $\epsilon\in(0,1)$, define $\gamma_{\epsilon}=\frac{1}{N_{3}}(1-\epsilon)$. The $\gamma_{\epsilon}$-shifted vector of $\Lambda_{3}$ is given by
$$\Lambda_{\gamma_{\epsilon}}=\Lambda_{3}-\gamma_{\epsilon}\Big(\sum_{i=1}^{N_{3}}\rho_{i},\cdots,\sum_{i=1}^{N_{3}}\rho_{i}\Big).$$
We say the Riemannian metric $g\in\Gamma^{+}_{2}(\gamma_{\epsilon})$ if $\Lambda_{\gamma_{\epsilon}}\in\Gamma^{+}_{2}$ at every point $x\in X$. 
\begin{theorem}
Let $(X,g)$ be a compact K\"{a}her manifold of complex dimension $n\geq 2$. If $g\in\Gamma^{+}_{2}(\gamma_{\epsilon})$ for some $0<\epsilon\leq  \sqrt{\frac{3n-2}{(n^{3}-3n+2)(n^{2}-1)}}$,  then $X$ has the rational cohomology ring of $\mathbb{CP}^{n}$.
\end{theorem}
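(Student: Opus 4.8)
The plan is to run, in the Kähler setting, the same two-step argument used for Theorems~\ref{T3} and~\ref{T5}: first convert the shifted Gårding cone condition into an $m_\epsilon$-positivity of the induced operator via Theorem~\ref{T1}, then feed this into a Bochner-type classification. Here the ambient dimension is $N=N_3=n^2=\dim\mathfrak{u}(n)$, and $\gamma_\epsilon=\tfrac1{N_3}(1-\epsilon)$ is exactly the parameter $\alpha_\epsilon$ of \eqref{alpha m} with $N$ replaced by $N_3$. Since $g\in\Gamma^{+}_2(\gamma_\epsilon)$ means $\Lambda_{\gamma_\epsilon}\in\Gamma^{+}_2$ at every point — in particular $\sigma_1(\Lambda_{\gamma_\epsilon})=\epsilon\sum_{i=1}^{N_3}\rho_i>0$ and $\sigma_2(\Lambda_{\gamma_\epsilon})>0$ — Corollary~\ref{C2} gives $\Lambda_3=(\rho_1,\dots,\rho_{N_3})\in\mathcal{P}_{m_\epsilon}$ pointwise, i.e. the induced Kähler curvature operator $\mathscr{R}$ is $m_\epsilon$-positive, where $m_\epsilon=\dfrac{N_3(N_3-1)\epsilon^2}{1+(N_3-1)\epsilon^2}$.

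Next I would check that the stated bound on $\epsilon$ is precisely the one forcing $m_\epsilon\le\tfrac{3n-2}{n}$. Using $n^3-3n+2=(n-1)^2(n+2)$ and $n^2-1=(n-1)(n+1)$, the hypothesis $\epsilon^2\le\dfrac{3n-2}{(n^3-3n+2)(n^2-1)}$ rewrites as $(N_3-1)\epsilon^2\le\dfrac{3n-2}{(n-1)^2(n+2)}=:x^{*}$. Since $t\mapsto\tfrac{N_3t}{1+t}$ is increasing and, crucially, $1+x^{*}=\dfrac{(n-1)^2(n+2)+(3n-2)}{(n-1)^2(n+2)}=\dfrac{n^3}{(n-1)^2(n+2)}$, we obtain
\[
m_\epsilon\;\le\;\frac{N_3\,x^{*}}{1+x^{*}}\;=\;n^2\cdot\frac{3n-2}{(n-1)^2(n+2)}\cdot\frac{(n-1)^2(n+2)}{n^3}\;=\;\frac{3n-2}{n}\,.
\]
Then Lemma~\ref{L1} (monotonicity of the normalized truncated sums) upgrades $m_\epsilon$-positivity of $\mathscr{R}$ to $\tfrac{3n-2}{n}$-positivity, the positivity being strict since $g$ lies in the \emph{open} cone $\Gamma^{+}_2(\gamma_\epsilon)$, so $\sigma_2(\Lambda_{\gamma_\epsilon})>0$ excludes the degenerate alternative~(2) of Theorem~\ref{T1}.

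Finally I would invoke the Kähler analogue of the Bochner-vanishing results used earlier in this section: a compact Kähler manifold of complex dimension $n\ge2$ whose induced Kähler curvature operator $\mathscr{R}$ on $\mathfrak{u}(n)$ is $\tfrac{3n-2}{n}$-positive has the rational cohomology ring of $\mathbb{CP}^n$ (Petersen and Wink; see \cite{PW21a}). Combined with the preceding step this gives the theorem.

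The real weight sits in that last cited input rather than in the cone estimate: its proof runs through the Kähler Weitzenböck formula and a decomposition of $\mathfrak{Ric}$ acting on $(p,q)$-forms — the Kähler counterpart of \eqref{decom} and Corollary~\ref{C1} — forcing every harmonic form to be a polynomial in the Kähler class, after which the ring structure is read off via hard Lefschetz together with $b_2=1$. On the side of the present paper the only thing to watch is the exact matching of constants: the identity $(n-1)^2(n+2)+(3n-2)=n^3$ is precisely what makes the threshold $\epsilon\le\sqrt{(3n-2)/\big((n^3-3n+2)(n^2-1)\big)}$ land at $m_\epsilon=\tfrac{3n-2}{n}$.
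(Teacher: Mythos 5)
Your proposal is correct and follows essentially the same route as the paper: check that the stated bound on $\epsilon$ gives $m_\epsilon\le 3-\tfrac{2}{n}$ (your algebra, via $n^3-3n+2=(n-1)^2(n+2)$ and $(n-1)^2(n+2)+(3n-2)=n^3$, is exactly the computation the paper leaves implicit), apply Theorem \ref{T1}/Corollary \ref{C2} to get that $\mathscr{R}$ is $\bigl(3-\tfrac{2}{n}\bigr)$-positive, in particular $\rho_1+\rho_2+\bigl(1-\tfrac{2}{n}\bigr)\rho_3>0$, and conclude by the Petersen--Wink Kähler vanishing theorem. The only slip is the citation: the relevant result is Theorem A of \cite{PW21b} (\emph{Vanishing and estimation results for Hodge numbers}), not \cite{PW21a}.
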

\begin{proof}
For every $\epsilon$ satisfying $0<\epsilon\leq\sqrt{\frac{3n-2}{(n^{3}-3n+2)(n^{2}-1)}},$ we have
\[
m_{\epsilon}=\frac{N_{3}(N_3-1)\epsilon^2}{1+(N_3-1)\epsilon^2}\leq 3 - \frac{2}{n}.
\]
Since $g \in \Gamma^{+}_{2}(\gamma_{\epsilon})$, Theorem \ref{T1} implies that the K\"ahler curvature operator $\mathscr{R}$ is $m$-positive for all $0 < m \leq 3 - \frac{2}{n}$, and in particular 
\[
\rho_1+\rho_2+\Big(1-\frac{2}{n}\Big)\rho_{3}>0.
\]
Applying Theorem A of \cite{PW21b}, it follows that $X$ has the rational cohomology ring of $\mathbb{CP}^{n}$.
\end{proof}
Similarly, we can also obtain another characterization of $\mathbb{CP}^{n}$ via the curvature operator $\mathscr{R}$.
\begin{theorem}
Let $(X,g)$ be a compact K\"{a}hler manifold of complex dimension $n\geq2$. If $g\in\Gamma^{+}_{2}(\gamma_{\epsilon})$  for some
$0<\epsilon\leq\sqrt{\frac{2}{(n^2-1)(n^2-2)}},$
then $X$ is biholomorphic to $\mathbb{CP}^{n}$.
\end{theorem}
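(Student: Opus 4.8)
The plan is to reduce the hypothesis to the pointwise inequality $\rho_{1}+\rho_{2}>0$ for the induced K\"ahler curvature operator $\mathscr{R}$, to translate this into positivity of the orthogonal holomorphic bisectional curvature, and then to invoke the resolution of the generalized Frankel conjecture in the positive case. First, with $N_{3}=n^{2}$ a direct substitution shows that at the endpoint $\epsilon=\sqrt{\tfrac{2}{(n^{2}-1)(n^{2}-2)}}$ one has $(N_{3}-1)\epsilon^{2}=\tfrac{2}{N_{3}-2}$, hence $m_{\epsilon}=\tfrac{N_{3}(N_{3}-1)\epsilon^{2}}{1+(N_{3}-1)\epsilon^{2}}=2$; since $m_{\epsilon}$ is strictly increasing in $\epsilon$, the hypothesis gives $m_{\epsilon}\leq 2$ (and automatically $\epsilon<1$, as $(n^{2}-1)(n^{2}-2)\geq 6$). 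Applying Theorem \ref{T1} and Corollary \ref{C2} with $N=N_{3}$ to the shifted vector $\Lambda_{\gamma_{\epsilon}}$, the assumption $g\in\Gamma^{+}_{2}(\gamma_{\epsilon})$ (that is, $\Lambda_{\gamma_{\epsilon}}\in\Gamma^{+}_{2}$) yields $\Lambda_{3}\in\mathcal{P}_{m_{\epsilon}}$ at every point of $X$; since $m_{\epsilon}\leq 2$, the inclusion $\mathcal{P}_{m_{\epsilon}}\subset\mathcal{P}_{2}$ (cf. Lemma \ref{L1}) gives $\Lambda_{3}\in\mathcal{P}_{2}$, i.e. $\rho_{1}+\rho_{2}>0$ everywhere.

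Next I would pass from $2$-positivity of $\mathscr{R}$ to positivity of the orthogonal holomorphic bisectional curvature. Let $Z=\tfrac{1}{\sqrt2}(e-iJe)$ and $W=\tfrac{1}{\sqrt2}(f-iJf)$ be unit $(1,0)$-vectors spanning orthogonal complex lines, so that $\{e,Je,f,Jf\}$ is orthonormal, and recall the standard identity $H(Z,W)=K(e,f)+K(e,Jf)$, where $K$ denotes sectional curvature. Since the Riemannian curvature operator $\mathfrak{R}$ of a K\"ahler manifold annihilates $\mathfrak{u}(n)^{\perp}$ and is self-adjoint, $\langle\mathfrak{R}\omega,\omega\rangle=\langle\mathscr{R}(P\omega),P\omega\rangle$ for every $2$-vector $\omega$, where $P$ is the orthogonal projection onto $\mathfrak{u}(n)$. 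A short computation gives $P(e\wedge f)=\tfrac12(e\wedge f+Je\wedge Jf)$ and $P(e\wedge Jf)=\tfrac12(e\wedge Jf-Je\wedge f)$, which are mutually orthogonal elements of $\mathfrak{u}(n)$ of common norm $\tfrac{1}{\sqrt2}$; writing $\hat A,\hat B$ for the associated unit vectors, $H(Z,W)=\tfrac12\big(\langle\mathscr{R}\hat A,\hat A\rangle+\langle\mathscr{R}\hat B,\hat B\rangle\big)\geq\tfrac12(\rho_{1}+\rho_{2})>0$, the inequality being the min--max bound that the sum of the two diagonal entries of $\mathscr{R}$ over an orthonormal pair is at least the sum $\rho_{1}+\rho_{2}$ of its two smallest eigenvalues. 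Hence $X$ has positive orthogonal holomorphic bisectional curvature at every point.

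Finally, by the positive case of the generalized Frankel conjecture --- a compact K\"ahler manifold with positive orthogonal holomorphic bisectional curvature is biholomorphic to $\mathbb{CP}^{n}$ (X.X.~Chen; Gu--Zhang; see also Wilking's work on the K\"ahler--Ricci flow, which preserves nonnegative orthogonal bisectional curvature) --- we conclude that $X$ is biholomorphic to $\mathbb{CP}^{n}$, as claimed.

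The main obstacle is the second step: one must carry out the bisectional-curvature-to-curvature-operator bookkeeping carefully --- verifying the projection formulas, the orthogonality and the equality of norms of $P(e\wedge f)$ and $P(e\wedge Jf)$, and the sign conventions entering $H(Z,W)=K(e,f)+K(e,Jf)$ --- and, crucially, note that $2$-positivity of $\mathscr{R}$ produces only \emph{orthogonal} bisectional positivity: it does not control the holomorphic sectional curvature, since $\rho_{1}$ may be negative. Consequently the classification input must be the orthogonal-bisectional version of the Frankel conjecture rather than the classical Siu--Yau/Mori theorem, and pinning down the sharpest citable form of that statement is the remaining delicate point.
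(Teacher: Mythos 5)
Your proposal is correct and follows essentially the same route as the paper: verify $m_{\epsilon}\leq 2$ at the endpoint, use Theorem \ref{T1}/Corollary \ref{C2} to get $\rho_{1}+\rho_{2}>0$, translate $2$-positivity of $\mathscr{R}$ into positive orthogonal bisectional curvature, and invoke the generalized Frankel conjecture (Chen, Gu--Zhang). The only difference is that you spell out the projection computation behind ``$2$-positive $\mathscr{R}$ implies positive orthogonal bisectional curvature,'' which the paper simply asserts in one sentence; your bookkeeping there is sound.
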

\begin{proof}
For every $\epsilon$ satisfying $0<\epsilon\leq\sqrt{\frac{2}{(N_{3}-1)(N_{3}-2)}},$  we have
\[
m_{\epsilon}=\frac{N_{3}(N_3-1)\epsilon^2}{1+(N_3-1)\epsilon^2}\leq 2.
\]
Since  $g\in\Gamma^{+}_{2}(\gamma_{\epsilon})$, Theorem \ref{T1} implies that the K\"{a}hler curvature operator  $\mathscr{R}$ is $m$-positive  with $0<m\leq2$,  and in particular $\rho_1+\rho_2>0$. Note that K\"ahler manifolds with  $2$-positive K\"ahler curvature operator have positive
orthogonal bisectional curvature, and thus are biholomorphic to $\mathbb{CP}^{n}$.
\end{proof}

  \section*{Acknowledgements}
This work is supported by the National Natural Science Foundation of China No. 12271496 (Huang) and the Youth Innovation Promotion Association CAS, the Fundamental Research Funds of the Central Universities, the USTC Research Funds of the Double First-Class Initiative. The second-name author was supported by the National Natural Science Foundation of China (Grant No. 12401065) and the Initial Scientific Research Fund of Young Teachers in Hefei University of Technology (Grant No. JZ2024HGQA0119). \medskip

\noindent\textbf{Data availability} {This manuscript has no associated data.}
\section*{Declarations}
\noindent\textbf{Conflict of interest} The author states that there is no conflict of interest.

\bigskip
\footnotesize

\end{document}